\newtheorem{thm}{Theorem}[section]
\newtheorem{Lemma}[thm]{Lemma}
\newtheorem{Proposition}[thm]{Proposition}
\newtheorem{Claim}[thm]{Claim}
\newtheorem*{thm*}{Theorem}
\theoremstyle{definition}
\newtheorem{Notation}[thm]{Notation}
\newtheorem{Definition}[thm]{Definition}
\newtheorem{Remark}[thm]{Remark}
\newtheorem{say}[thm]{}
\definecolor{wwwwww}{rgb}{0.4,0.4,0.4}
\newcommand{\Spec}{\operatorname{Spec}}
\renewcommand{\P}{\mathbb{P}}
\newcommand{\F}{\mathbb{F}}
\newcommand{\p}{\mathbb{P}}
\DeclareMathOperator{\Cox}{Cox}
\DeclareMathOperator{\Pic}{Pic}
\DeclareMathOperator{\Eff}{Eff}
\DeclareMathOperator{\Gal}{Gal}
\DeclareMathOperator{\Nef}{Nef}
\begin{document}

\title{On the birational geometry of conic bundles over the projective space}

\author[Alex Massarenti]{Alex Massarenti}
\address{\sc Alex Massarenti\\ Dipartimento di Matematica e Informatica, Universit\`a di Ferrara, Via Machiavelli 30, 44121 Ferrara, Italy}
\email{msslxa@unife.it}

\author[Massimiliano Mella]{Massimiliano Mella}
\address{\sc Massimiliano Mella\\ Dipartimento di Matematica e Informatica, Universit\`a di Ferrara, Via Machiavelli 30, 44121 Ferrara, Italy}
\email{mll@unife.it}

\date{\today}
\subjclass[2020]{Primary 14E08, 14M20, 14M22; Secondary 14J30, 14E30, 14J26.}
\keywords{Conic bundles, Calabi-Yau pairs, unirationality}

\begin{abstract}
Let $\pi:Z\rightarrow\mathbb{P}^{n-1}$ be a general minimal $n$-fold
conic bundle with a hypersurface $B_Z\subset\mathbb{P}^{n-1}$
of degree $d$ as discriminant. We prove that if $d\geq 4n+1$ then $-K_Z$ is not
pseudo-effective, and that if $d = 4n$ then none of the integral multiples of $-K_{Z}$ is effective.
Finally, we provide examples of smooth unirational $n$-fold conic bundles
$\pi:Z\rightarrow\mathbb{P}^{n-1}$ with discriminant of arbitrarily
high degree. 
\end{abstract}

\maketitle
\setcounter{tocdepth}{1}
\tableofcontents

\section{Introduction}
A projective variety $X$ is rationally connected if through any two
general points of $X$ there is an irreducible rational curve $C\subset
X$. We refer to \cite{Ar05} for a comprehensive survey on the
subject. All smooth Fano varieties, over an algebraically closed field,
are rationally connected \cite{Ca92}, \cite{KMM92}. While there are
finitely many families of smooth Fano varieties, rationally connected
varieties come in infinitely many families, and indeed there are
rationally connected varieties that are not birational to a Fano
variety, for example birationally rigid conic bundles over rational
surfaces \cite{Sa80}, \cite{Co95}, \cite[Theorem 4]{BCZ04}. 

A projective variety $X$ is of Fano type if it is normal and admits
a $\mathbb{Q}$-divisor $\Delta\subset X$ such that the pair
$(X,\Delta)$ is Kawamata log terminal and $-K_X+\Delta$ is
ample. These can be viewed as a log version of Fano varieties. Even
though varieties of Fano type come in infinitely many families up to
birational equivalence \cite{Ok09}, there are examples of rationally
connected varieties that are not birational to a variety of Fano type
\cite{Kr18}. This answers to a question raised in \cite{CG13}. The
counterexamples in \cite{Kr18} are constructed as double covers of
$\mathbb{P}^m\times\mathbb{P}^1$ branched along divisors of
sufficiently big and even bi-degree.      

Further weakening the positivity hypotheses on $-K_X$ we get to the
notion of numerical Calabi-Yau pair. Following \cite{Ko17} we define a
numerical Calabi-Yau pair as a pair $(X,\Delta)$ where $X$ is a normal
projective variety and $\Delta\subset X$ is a pseudo-effective
$\mathbb{Q}$-divisor such that $K_X +\Delta$ is a
$\mathbb{Q}$-Cartier numerically trivial divisor. The question of
whether or not any rationally connected variety is birational to the
underlying variety of a Calabi-Yau pair has been addressed for the
fist time by J. Koll\'{a}r in \cite{Ko17} taking into account the
specific class of rationally connected varieties formed by conic
bundles over $\p^2$. 

Nowadays, thanks to the minimal model program \cite{Mo88},
\cite{BCHM10}, we know that any uniruled projective variety is
birationally equivalent to a mildly singular projective variety $Z$
admitting a contraction $\pi:Z\rightarrow W$, of relative Picard
number one, to a lower dimensional normal projective variety $W$ such
that $-K_Z$ is $\pi$-ample. When $\dim(Z) = \dim(W)+1$ we say that
$\pi:Z\rightarrow W$ is a conic bundle. By \cite[Corollary 1.3]{GHS03}
a conic bundle over a rational variety is rationally connected. We say
that $\pi:Z\rightarrow W$ is extremal if its relative Picard rank is one, and that it is minimal if in addition it does not have rational sections. The fundamental invariant of a conic bundle is the
discriminant divisor $B_Z$, that is the closure of the locus of $W$ over
which the fibers of $\pi:Z\rightarrow W$ are reducible. 

In this paper we study the pseudo-effectiveness of the anti-canonical
divisor of minimal conic bundles over projective spaces. Recall that a
$\mathbb{Q}$-divisor on a normal projective variety $X$ is
pseudo-effective if its numerical class is a limit of effective $\mathbb{Q}$-divisors. We say that $-K_X$ is birationally effective if there exists a projective normal variety $Y$,
birational to $X$, and such that $-K_{Y}$ is pseudo-effective. Similarly, we say that $-K_X$ is birationally
effective if some integral multiple of $-K_{Y}$ is effective. 

In \cite{Ko17} J. Koll\'{a}r proved that if $\pi:Z\rightarrow\mathbb{P}^2$ is a general $3$-fold conic bundle with a curve $B_Z\subset\mathbb{P}^2$ of degree $d\geq 19$ as discriminant then $-K_Z$ is not
birationally pseudo-effective. In general, it is an open problem to
establish under which conditions a conic bundle is birational to the
underlying variety of a numerical Calabi-Yau pair \cite[Question
14.5.1]{Pr18}. In this direction our main results can be summarized as follows,  see Paragraph
\ref{gen} for a precise definition of general.

\begin{thm}\label{main1}
Let $\pi:Z\rightarrow\mathbb{P}^{n-1}$ be a general minimal $n$-fold
conic bundle with discriminant $B_Z\subset\mathbb{P}^{n-1}$ of
degree $d$, and $n\geq 3$. If $d\geq 4n+1$ then $-K_Z$ is not
pseudo-effective, and if $d = 4n$ then none of the integral multiples of $-K_{Z}$ is effective.  
\end{thm}

As shown in \cite[Example 20]{Ko17} and in Remark \ref{ur_hd} there are conic bundles $\pi:Z\rightarrow\mathbb{P}^{n-1}$, with discriminant of arbitrarily high degree, whose anti-canonical
divisor $-K_Z$ is effective. Hence, a genericity assumption in Theorem
\ref{main1} is needed.

Building on \cite[Example 20]{Ko17} we investigate the unirationality of a certain class of
conic bundles.  
Recall that an $n$-dimensional variety $X$ over a field $k$ is
rational if it is birational to $\mathbb{P}^n_{k}$, while $X$ is
unirational if there is a dominant rational map
$\mathbb{P}^m_{k}\dasharrow X$. The L\"uroth problem, asking whether
every unirational variety was rational, dates back to the second half
of the nineteenth century \cite{Lu75}. These two notions turned out to
be equivalent for curves and complex surfaces. Only in the $1970$s
examples of unirational but non-rational $3$-folds, over an
algebraically closed field of characteristic zero, were given by
M. Artin and D. Mumford \cite{AM72}, V. Iskovskih and I. Manin
\cite{IM71} and  C. Clemens and P. Griffiths \cite{CG72}. Note that
unirational varieties are rationally connected.  

Rationality of $3$-fold conic bundles has been extensively studied and a
precise conjectural rationality statement is at hand, we refer to
\cite{Pr18} for a comprehensive survey. The further notion of stable
rationality has been treated in \cite{HKT16} and \cite{AO18}. On the contrary
unirationality for $3$-fold conic bundles is still widely open and not
much is known. We refer to \cite{KM17} for the unirationality of a
special class of conic bundles.  Classically, conjectures on
unirationality of conic bundles take into account the degree of the
discriminant and, mimicking what is known about rationality, conic
bundles with general discriminant of large degree are expected to be
non-unirational \cite[Section 14.2]{Pr18}. In Section \ref{ur} we
prove the following result. 

\begin{thm}\label{main2}
For all $n\geq 3$ there are unirational, smooth and extremal $n$-fold conic bundles $\pi:Z\rightarrow\mathbb{P}^{n-1}$ with discriminant $B_Z\subset\mathbb{P}^{n-1}$ of arbitrarily high degree.
\end{thm}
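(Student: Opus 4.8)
The plan is to realise the required conic bundles explicitly, as divisors in a $\P^{2}$--bundle over $\P^{n-1}$, and then to prove unirationality by trivialising the conic over a rational double cover of the base. Concretely, I would set $E=\O\oplus\O\oplus\O(m)$ on $\P^{n-1}$, let $\pi:\P(E)\to\P^{n-1}$ be the associated projective bundle, and take $Z\subset\P(E)$ to be a general member of the sub--linear system of $|\O_{\P(E)}(2)\otimes\pi^{*}\O(1)|$ spanned by the diagonal conics
\[
a\,x_{0}^{2}+b\,x_{1}^{2}+c\,x_{2}^{2},\qquad a,b\in H^{0}(\O_{\P^{n-1}}(1)),\ \ c\in H^{0}(\O_{\P^{n-1}}(2m+1)).
\]
The fibres of $\pi|_{Z}$ are plane conics, so $Z\to\P^{n-1}$ is a conic bundle, and its discriminant is the hypersurface $\{abc=0\}$ of degree $2m+3$, which is arbitrarily large for $m\gg 0$.

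Next I would establish smoothness of $Z$ by a Bertini argument internal to this diagonal subsystem. A base point $(p;[x_{0}:x_{1}:x_{2}])$ would have to satisfy $a(p)x_{0}^{2}+b(p)x_{1}^{2}+c(p)x_{2}^{2}=0$ for every admissible triple $(a,b,c)$; letting $a,b,c$ vary independently the scalars $a(p),b(p),c(p)$ range over all of the ground field, forcing $x_{0}=x_{1}=x_{2}=0$, which is absurd. Thus the subsystem is base point free, and over a field of characteristic zero its general member $Z$ is smooth, with $a,b$ spanning distinct hyperplanes and $c$ general of degree $2m+1$.

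The decisive step is the choice of trivialising cover. Over $K=k(\P^{n-1})$ the generic conic $a x_{0}^{2}+b x_{1}^{2}+c x_{2}^{2}=0$ contains the point $[\sqrt{-b/a}:1:0]$ the moment $-ab$ becomes a square (indeed, for general coefficients the conic is anisotropic over $K$, so no section exists over $\P^{n-1}$ itself and the passage to a cover is genuinely needed). I would therefore introduce the double cover
\[
\sigma:\widetilde W=\{\,s^{2}+ab=0\,\}\longrightarrow\P^{n-1},
\]
ramified along $\{ab=0\}$. Because $a,b$ are general linear forms, $s^{2}+ab$ is a quadratic form of rank three in the variables $(s,u_{0},\dots,u_{n-1})$, so $\widetilde W\subset\P^{n}$ is the cone over a smooth conic with vertex a linear $\P^{n-3}$; in particular $\widetilde W$ is rational. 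By construction $k(\widetilde W)=K(\sqrt{-ab})$, hence the pulled--back conic bundle $\widetilde Z=Z\times_{\P^{n-1}}\widetilde W$ acquires the rational section $[\sqrt{-b/a}:1:0]$ over $\widetilde W$.

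Finally I would assemble the conclusion: a conic bundle carrying a rational section over a rational base is itself rational, being birational to a $\P^{1}$--bundle over $\widetilde W$, so $\widetilde Z$ is rational; the degree two dominant projection $\widetilde Z\to Z$ then exhibits $Z$ as dominated by a rational variety, hence unirational. The genuinely delicate point, and the reason the construction survives for unbounded discriminant, is the decoupling of the two numerical constraints: the discriminant degree is dictated by $\deg c=2m+1$ and may be pushed arbitrarily high, whereas the trivialising cover depends only on $\deg(ab)=2$ and so remains a rank three quadric, and thus rational, no matter how large $m$ is. For $n=1$ the statement is vacuous and for $n=2$ the surfaces $Z$ are already rational by Tsen's theorem, so the substance lies in $n\ge 3$, where this separation of degrees is exactly what yields unirationality with discriminant of unbounded degree.
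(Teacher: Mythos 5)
Your argument is correct, and at bottom it runs on the same engine as the paper's: a conic bundle inside a split $\P^2$-bundle $\P(\O\oplus\O\oplus\O(m))$ whose discriminant degree is driven up by the coefficient of the twisted coordinate, together with a $2$-section of bounded branch degree that splits the generic conic. Indeed, your splitting cover $\widetilde W=\{s^2+ab=0\}$ is nothing but the divisor $\{x_2=ax_0^2+bx_1^2=0\}\subset Z$, which is precisely the analogue of the divisor $D$ in (\ref{f_div}) for the paper's triple $(b,1,0)$: a double cover of $\P^{n-1}$ branched in a degree-two hypersurface, hence rational. Where you differ is in execution. The paper takes a general (non-diagonal) quadratic form, so $D$ is branched along the smooth quadric $\{\sigma_1^2-\sigma_0\sigma_2=0\}$, and concludes via Enriques' criterion (Proposition \ref{Enr}) as a black box; you take the diagonal form, so the branch locus degenerates to the two hyperplanes $\{ab=0\}$, and you unwind Enriques' criterion by hand: base change to $\widetilde W$, observe the tautological section, obtain a $\P^1$-bundle over a rational variety, and descend unirationality along the degree-two dominant map $\widetilde Z\to Z$. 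Your version is more elementary and self-contained; the paper's buys, in addition, a second family of examples (the triples $(a,2,0)$) whose $2$-section is a quartic double solid and hence requires the genuinely deeper input of Lemma \ref{2to1c}, i.e.\ degree-two del Pezzo surfaces over $k(t_1,\dots,t_{n-2})$ and Manin's unirationality theorem. Your parenthetical claim that the generic diagonal conic is anisotropic over $k(\P^{n-1})$ is not needed for the proof (if it failed, $Z$ would simply be rational), so nothing rests on it.

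Two small caveats, both shared with the paper. First, for $n\geq 4$ your $Z\to\P^{n-1}$ is not flat over the codimension-three locus $\{a=b=c=0\}$, where the fiber is all of $\P^2$; this is the same abuse of Definition \ref{def:cb} that the paper commits in Remark \ref{con_fib} (there only for $n\geq 7$, since the general form has six coefficients), and $Z$ is still smooth there because $\{a=b=c=0\}$ is a smooth complete intersection. Second, your appeal to Bertini for the base-point-free diagonal subsystem needs characteristic zero as stated, exactly like the paper's citation of \cite[Corollary 10.9]{Ha77}; in characteristic different from two one can instead verify smoothness directly from the Jacobian of the diagonal form.
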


The unirational conic bundles in Theorem \ref{main2} are explicitly constructed as hypersurfaces in toric varieties isomorphic to the projectivization of suitable splitting vector bundles over the projective space. Smooth extremal $3$-fold conic bundles $\pi:Z\rightarrow\mathbb{P}^{2}$ with discriminant of degree at least six are not rational, see for instance \cite[Theorem 9.1]{Pr18}. In particular, they do dot have rational sections. Therefore, by Theorem \ref{main2} we get that there are smooth, minimal and unirational $3$-fold conic bundles over the projective plane with discriminant of arbitrarily high degree. In Remark \ref{ur_hd} we observe that Theorem \ref{main2} can be extended to surface conic bundles when the base field is not algebraically closed.

\subsection*{Organization of the paper} 
Unless otherwise stated we work over a perfect field $k$ of characteristic different from two. The paper is organized as follows. In Section \ref{CB_H} we recall the main definitions and facts about conic bundles and Hirzebruch surfaces. In Section \ref{main_sec} we prove Theorem \ref{main1}. Finally, in Section \ref{ur} we prove Theorem \ref{main2}.

\subsection*{Acknowledgments} 
The authors thank Ciro Ciliberto, Antonio Laface and Stefan Schreieder for helpful discussions, and are members of the Gruppo Nazionale per le Strutture Algebriche, Geometriche e le loro Applicazioni of the Istituto Nazionale di Alta Matematica "F. Severi" (GNSAGA-INDAM). We thank the referee for pointing out a mistake in a statement, about a birational version of Theorem \ref{main1} for $3$-folds, that we removed.

\section{Conic bundles and Hirzebruch surfaces}\label{CB_H}
In this section we recall the definitions and results about Hirzebruch
surfaces and conic bundles which we will use in the following.

\begin{Definition} Let $\mathbb{F}_e=\mathbb{P}_{\mathbb{P}^1}(\mathcal{O}_{\P^1}\oplus\mathcal{O}_{\P^1}(-e))$ be a Hirzebruch surface, and $\pi:\mathbb{F}_e\rightarrow\mathbb{P}^1$ the projection with general fiber $F$. We will denote by
\begin{itemize}
\item[-] $C_0\subset \mathbb{F}_e$ the only section with negative self intersection if $e>0$, and a fixed section if $e=0$;
\item[-]  and by $F_p:=\pi^{-1}(\pi(p))$ the fiber of $\pi$ through the point $p\in \mathbb{F}_e$. 
\end{itemize}
\end{Definition}

\subsection{Elementary transformations of Hirzebruch surfaces}\label{el_2}
Take a point $p\in \mathbb{F}_e$, and let $\psi:X_p\rightarrow\mathbb{F}_e$ be the blow-up of $\mathbb{F}_e$ at $p$. The strict transform $\widetilde{F}_p$ of $F_p$ is a $(-1)$-curve. Hence, there is a birational morphism $\phi:X_p\rightarrow \F_r$ contracting $\widetilde{F}_p$. The birational map $\phi\circ\psi^{-1}:\mathbb{F}_e\dasharrow \mathbb{F}_r$ is the elementary transformation centered at $p\in \mathbb{F}_e$. Note that of $p\notin C_0$ then $r = e-1$, while $p\in C_0$ implies that $r = e+1$.

In accordance with \cite{Ko17} we introduce the following definitions.

\begin{Definition}\label{def:cb}
A conic bundle is a flat and projective morphism $\pi:Z\rightarrow W$, between normal and projective varieties, with fibers isomorphic to plane conics. 

A conic bundle $\pi:Z\rightarrow W$ is extremal if the relative Picard number of $\pi$ is one, and it is minimal if it is extremal and does not have rational sections.

The discriminant $B_Z$ is the subscheme of $W$ parametrizing singular  fibers of $\pi:Z\rightarrow W$. Set-theoretically this is the locus of points $w\in W$ such that $Z_w := \pi^{-1}(w)$ is singular. 
\end{Definition}

Note that if $Z$ is smooth then $\pi:Z\rightarrow W$ is minimal if and only if $\Pic(Z)\cong \pi^{*}\Pic(W)\oplus\mathbb{Z}[\omega_{Z/W}^{-1}]$. In this case, if $\mathcal{L}$ is a line bundle on $Z$ then $\mathcal{L}$ has even degree on a general fiber of $\pi$, and $\mathcal{L}$ has the same degree on the two components of a reducible fiber.  

\begin{say}\label{gen}
Now, we construct explicit examples of conic bundles and rigorously define the genericity condition in Theorem \ref{main1}.

Let $a_0,a_1,a_2 \in\mathbb{Z}_{\geq 0}$, with $a_0\geq a_1\geq a_2$, be non-negative integers, and consider the simplicial $\mathbb{Q}$-factorial toric variety $\mathcal{T}_{a_0,a_1,a_2}$ with Cox ring
$$\Cox(\mathcal{T}_{a_0,a_1,a_2})\cong k[x_0,\dots,x_{n-1},y_0,y_1,y_2]$$
$\mathbb{Z}^2$-grading given, with respect to a fixed basis $(H_1,H_2)$ of $\Pic(\mathcal{T}_{a_0,a_1,a_2})$, by the following matrix
$$
\left(\begin{array}{cccccc}
1 & \dots & 1 & -a_0 & -a_1 & -a_2\\ 
0 & \dots & 0 & 1 & 1 & 1
\end{array}\right)
$$
and irrelevant ideal $(x_0,\dots,x_{n-1})\cap (y_0,y_1,y_2)$.
Then
$$\mathcal{T}_{a_0,a_1,a_2}\cong \mathbb{P}(\mathcal{E}_{a_0,a_1,a_2})$$
with
$\mathcal{E}_{a_0,a_1,a_2}\cong\mathcal{O}_{\mathbb{P}^n}(a_0)\oplus
\mathcal{O}_{\mathbb{P}^n}(a_1)\oplus
\mathcal{O}_{\mathbb{P}^n}(a_2)$.
The presentation in Cox coordinates allows us to write global equations for conic bundles in $\mathcal{T}_{a_0,a_1,a_2}$ as follows:
\stepcounter{thm}
\begin{equation}\label{Cox_g}
X_{\mathcal{T}} := \{\sigma_{d_0}y_0^2+2\sigma_{d_1}y_0y_1+2\sigma_{d_2}y_0y_2+\sigma_{d_3}y_1^2+2\sigma_{d_4}y_1y_2+\sigma_{d_5}y_2^2 = 0\}\subset \mathcal{T}_{a_0,a_1,a_2}
\end{equation}
where $\sigma_{d_i}\in k[x_0,\dots,x_n]_{d_i}$ is a homogeneous polynomial of degree $d_i$, and 
$$d_0-2a_0 = d_1-a_0-a_1 = d_2-a_0-a_2 = d_3-2a_1 = d_4-a_1-a_2 = d_5-2a_2.$$
Note that the discriminant of $X_{\mathcal{T}}$ has degree $d_0+d_3+d_5$. Let $\pi_{X_{\mathcal{T}}}:X_{\mathcal{T}}\rightarrow\mathbb{P}^{n-1}$ be the restriction to $X_{\mathcal{T}}$ of the projection $\mathcal{T}_{a_0,a_1,a_2} = \mathbb{P}(\mathcal{E}_{a_0,a_1,a_2 })\rightarrow\mathbb{P}^{n-1}$.

Now, let $\pi:Z\rightarrow\mathbb{P}^{n-1}$ be a conic bundles with
discriminant of degree $d$, $L\subset\mathbb{P}^{n-1}$ a general line, $S_L = \pi^{-1}(L)$, $\mu:S_L\rightarrow\mathbb{F}_e$ a blow-down morphism, and
$p_1,\dots,p_d\in\mathbb{F}_e$ the blown-up points. We say that
$\pi:Z\rightarrow\mathbb{P}^{n-1}$ is general if the points
$p_1,\dots,p_d\in\mathbb{F}_e$ are in general position. We will now construct conic bundles for which this generality condition holds. Reasoning backwards begin with general points $p_1,\dots,p_d\in\mathbb{F}_e$ and denote by $S$ their blow-up. Then $S$ is a surface conic bundle and hence it can be embedded in a projective bundle 
$$\mathbb{P}(\mathcal{O}_{\mathbb{P}^1}(a_0)\oplus
\mathcal{O}_{\mathbb{P}^1}(a_1)\oplus
\mathcal{O}_{\mathbb{P}^1}(a_2))\rightarrow\mathbb{P}^1$$
and written in the form (\ref{Cox_g}):
$$S = \{\sigma_{d_0}y_0^2+2\sigma_{d_1}y_0y_1+2\sigma_{d_2}y_0y_2+\sigma_{d_3}y_1^2+2\sigma_{d_4}y_1y_2+\sigma_{d_5}y_2^2 = 0\}$$
with $\sigma_{d_i}\in k[x_0,x_1]_{d_i}$. Now, consider polynomials $s_{d_i}\in k[x_0,\dots,x_{n-1}]_{d_i}$ such that 
$$s_{d_i}(x_0,x_1,0,\dots,0) = \sigma_{d_i}$$ 
and the conic bundle 
$$
X_{\mathcal{T}} := \{s_{d_0}y_0^2+2s_{d_1}y_0y_1+2s_{d_2}y_0y_2+s_{d_3}y_1^2+2s_{d_4}y_1y_2+s_{d_5}y_2^2 = 0\}\subset \mathcal{T}_{a_0,a_1,a_2}.
$$ 
Then $X_{\mathcal{T}|L_0} = S$, where $L_0 = \{x_2 = \dots = x_{n-1} = 0\}$, determines the $d$ general points $p_1,\dots,p_d\in\mathbb{F}_e$ we started with. Therefore, if $L\subset\mathbb{P}^{n-1}$ is a general line then $\mu:S_L\rightarrow\mathbb{F}_e$ yields $d$ general point on $\mathbb{F}_e$. We would like to clarify that this argument works only in the scenario of this particular example, and not for any conic bundle over $\mathbb{P}^{n-1}$.
\end{say} 

\begin{Remark}\label{con_fib}
Note that when $n\geq 7$ the locus $\Sigma=\{\sigma_{d_0} = \sigma_{d_1} =\dots = \sigma_{d_5} = 0\}\subset\mathbb{P}^{n-1}$ is not empty, and the fiber of $X_{\mathcal{T}}$ over each point of $\Sigma$ is the whole of $\mathbb{P}^2$. So $\pi_{X_{\mathcal{T}}}:X_{\mathcal{T}}\rightarrow\mathbb{P}^{n-1}$ is not flat, and hence it is not a conic bundle in the sense of Definition \ref{def:cb}. However, by abusing terminology, in Section \ref{ur} we will refer to such non flat fibrations as conic bundles as well.  

As long as the vector $(d_0-2a_0,2)$ lies in the cone generated by $(1,0)$ and $(-a_2,1)$ the divisor $(d_0-2a_0)H_1+2H_2$ is nef on $\mathcal{T}_{a_0,a_1,a_2}$. Hence, by \cite[Theorem 6.3.12]{CLS11} $X_{\mathcal{T}}\subset \mathcal{T}_{a_0,a_1,a_2}$ is cut out by a section of a globally generated divisor. So, when the $\sigma_{d_i}$ are general, by Bertini's theorem \cite[Corollary 10.9]{Ha77} $X_{\mathcal{T}}$ is smooth. Finally, when $n\geq 3$ and $(d_0-2a_0)H_1+2H_2$ is ample the Grothendieck-Lefschetz theorem yields that the Picard rank of $X_{\mathcal{T}}$ is two. 
\end{Remark}

\subsection{Elementary transformations of $3$-fold conic bundles}\label{el_3}
Let $\pi:Z\rightarrow W$ be a smooth $3$-fold conic bundle over a smooth surface. Take a reduced curve $\Gamma\subset W$ such that $\pi$ is smooth over the generic points of $\Gamma$, and let $s:\Gamma\dasharrow Z_{\Gamma}:= \pi^{-1}(\Gamma)\subset Z$ be a rational section of $\pi_{|\Gamma}$. By blowing-up $\overline{s(\Gamma)}$ and then contracting, possibly after a sequence of flops, the strict transform of $Z_{\Gamma}$ we get another smooth $3$-fold conic bundle $\pi':Z'\rightarrow W$ with the same discriminant curve of $\pi:Z\rightarrow W$ \cite[Section 26]{Ko17}.

\begin{Definition}\label{bir_pse}
Let $X$ be a projective normal variety. A $\mathbb{Q}$-divisor on $X$ is effective if it has an integral multiple that is effective. A $\mathbb{Q}$-divisor on $X$ is pseudo-effective if its numerical class in $N_{\dim(X)-1}(X)\otimes_{\mathbb{Z}}\mathbb{Q}$ is a limit of effective $\mathbb{Q}$-divisors.

We say that $-K_X$ is birationally pseudo-effective if there exists a projective normal variety $Y$, birational to $X$, and such that $-K_{Y}$ is pseudo-effective. Similarly, we say that $-K_X$ is birationally effective if some integral multiple of $-K_{Y}$ is effective. 
\end{Definition}

Finally, we recall the following unirationality criterion due to
F. Enriques \cite[Proposition 10.1.1]{IP99} that will be used in Section \ref{ur}.
\begin{Proposition}\label{Enr}
Let $\pi:Z\rightarrow W$ be a conic bundle. Then $Z$ is unirational if and only if there exists a unirational subvariety $D\subset Z$ such that $\pi_{|D}:D\rightarrow W$ is dominant. 
\end{Proposition}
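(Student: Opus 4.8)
The plan is to prove the unirationality criterion of Enriques (Proposition~\ref{Enr}) in both directions, with the nontrivial content residing in the sufficiency direction.

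For the \emph{necessity} direction, suppose $Z$ is unirational. Then by definition there is a dominant rational map $\mathbb{P}^m_k\dashrightarrow Z$; taking $D = Z$ itself gives a unirational subvariety with $\pi_{|D}=\pi$ dominant, since $\pi:Z\to W$ is surjective (being a projective morphism whose fibers are conics, hence nonempty over every point of $W$). So necessity is essentially trivial once one is allowed to take $D=Z$, and the statement is really interesting because it lets one detect unirationality by exhibiting a \emph{proper} multisection $D$ that is itself unirational.

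For the \emph{sufficiency} direction, I would argue as follows. First I would pass to the generic fiber: let $\eta$ be the generic point of $W$ and let $K=k(W)$ be the function field. The generic fiber $Z_\eta$ is a smooth conic over $K$, i.e.\ a Brauer--Severi variety of dimension one. The key classical fact is that a conic over a field has a rational point if and only if it is isomorphic to $\mathbb{P}^1$ over that field; and once it has a single rational point, projection from that point gives an isomorphism with $\mathbb{P}^1$. The subvariety $D\subset Z$ with $\pi_{|D}:D\to W$ dominant meets the generic fiber $Z_\eta$ in a nonempty zero-dimensional subscheme, hence provides a \emph{closed point} of $Z_\eta$, i.e.\ a point defined over some finite extension $K'/K$. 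Base-changing to $K'$, the conic $Z_{\eta}\otimes_K K'$ acquires a rational point and therefore becomes isomorphic to $\mathbb{P}^1_{K'}$. Spreading this out, after a generically finite base change $W'\to W$ one gets a birational map $Z\times_W W'\dashrightarrow \mathbb{P}^1\times_k W'$ over $W'$.

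The final step is to control the base and convert this into a statement over $k$. Here I would use that $D$ is unirational: a generically finite cover $W'\to W$ that trivializes the conic can be taken to be (birational to) $D$ itself, or a suitable component of the fiber product encoding the chosen point of $Z_\eta$. Since $D$ is unirational there is a dominant rational map $\mathbb{P}^N_k\dashrightarrow D$, and composing with the rational map $D\times_W W \cdots$ I obtain a tower $\mathbb{P}^N_k\times_k\mathbb{P}^1 \dashrightarrow Z\times_W D \dashrightarrow Z$ that is dominant, exhibiting $Z$ as unirational. \textbf{The main obstacle} I expect is making the descent from $W'$ back to a unirational parametrization precise: one must check that the rational section of $\pi$ over $D$ (equivalently, the rational point of $Z_\eta$ over $k(D)$) genuinely produces a birational $\mathbb{P}^1$-bundle structure after base change to $D$, and then glue the unirational parametrization of $D$ with the $\mathbb{P}^1$-fibration to produce a dominant map from a rational variety, handling carefully the fact that $D$ need only be a multisection rather than a genuine section. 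Since this is a classical result, I would in practice cite \cite[Proposition~10.1.1]{IP99} for the delicate descent and only sketch the reduction to the generic-fiber statement above.
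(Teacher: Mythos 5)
Your argument is correct, and it is worth noting that the paper itself does not prove this statement at all: it is recalled as a classical result of Enriques with a bare citation to \cite[Proposition 10.1.1]{IP99}. Your sketch is the standard proof. The necessity direction is indeed trivial with $D=Z$. For sufficiency, your reduction is sound: in the nontrivial case $D$ is a proper subvariety, hence $\pi_{|D}$ is generically finite and $D$ cuts the generic fiber $Z_\eta$ in a closed point with residue field $k(D)$; base-changing to $k(D)$ the conic acquires a rational point (the diagonal) and becomes $\mathbb{P}^1_{k(D)}$. The only place where you are more cautious than necessary is the final ``descent'' worry: no descent from $W'$ back to $W$ is needed, because the tower you already wrote down finishes the proof. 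Concretely, the (unique) component of $Z\times_W D$ dominating $D$ is a conic bundle over $D$ with a genuine rational section given by the diagonal, hence is birational to $D\times\mathbb{P}^1$ over $D$, hence is dominated by $\mathbb{P}^N_k\times\mathbb{P}^1$; and the first projection $Z\times_W D\rightarrow Z$ is dominant because $\pi_{|D}$ is. The multisection-versus-section issue is resolved precisely by this base change to $D$, so the citation you propose to fall back on is not actually needed for any delicate step.
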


\section{Numerical Calabi-Yau pairs and  conic bundles over the projective space}\label{main_sec}

Let $S\rightarrow\mathbb{P}^1$ be a minimal conic bundle surface over a non algebraically closed field $k$ with discriminant $B_S$ of degree $\delta_S$, and general  fiber $F$.

\begin{Lemma}\label{K_1}
Assume that $\delta_S > 8 + 4r$, where $r\in\mathbb{N}$, and $-K_S+rF$ is pseudo-effective . Then all pseudo-effective $\mathbb{Q}$-divisors on $S$ are effective. In particular $m(-K_S+rF)$ is effective for some integer $m\geq 1$.
\end{Lemma}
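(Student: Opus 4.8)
The plan is to work on the minimal conic bundle surface $S\to\mathbb{P}^1$, which after choosing a blow-down $\mu:S\to\mathbb{F}_e$ can be realized as the blow-up of a Hirzebruch surface at the $\delta_S$ points lying over the discriminant $B_S$. First I would fix notation for the Picard group of $S$: writing $E_1,\dots,E_{\delta_S}$ for the exceptional curves of $\mu$ and pulling back the standard generators $C_0,F$ from $\mathbb{F}_e$, I get an explicit basis in which I can compute intersection numbers and express $K_S$ via the adjunction/blow-up formula $K_S=\mu^*K_{\mathbb{F}_e}+\sum E_i$. The key numerical input is that $-K_S\cdot F=2$ (so $(-K_S+rF)\cdot F=2$ as well), together with $K_S^2$, which drops by one for each blown-up point and hence equals $8-\delta_S$; the hypothesis $\delta_S>8+4r$ is exactly what is needed to force a strongly negative self-intersection on the relevant divisor class.

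Next I would exploit the structure of the pseudo-effective cone $\Eff(S)$. For a rational surface $S$ the pseudo-effective cone equals the closure of the cone spanned by the negative curves together with the nef cone, and on a conic bundle surface the negative curves are controlled by the fibration. The plan is to show that the divisor $D:=-K_S+rF$ lies in the \emph{interior} of $\Eff(S)$, i.e.\ is big: this follows because $-K_S+rF$ is assumed pseudo-effective, and I would compute $D^2=K_S^2-2rK_S\cdot F+r^2F^2 = (8-\delta_S)+4r$, using $F^2=0$ and $-K_S\cdot F=2$. The inequality $\delta_S>8+4r$ gives $D^2<0$; but a pseudo-effective class with negative self-intersection on a surface has nontrivial negative part in its Zariski decomposition, which I would leverage to pin down the structure of the effective cone.

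The heart of the argument is to show that every pseudo-effective $\mathbb{Q}$-divisor on $S$ is actually effective, which on a surface means showing that the pseudo-effective cone coincides with the effective cone, equivalently that $\Eff(S)$ is generated by the classes of finitely many irreducible curves (a Mori dream/polyhedral statement) or at least that no boundary ray is "irrational/transcendental". I expect the main obstacle to be controlling the nef boundary of $\Eff(S)$: a priori a pseudo-effective class could lie on a limit of effective classes without being effective itself, precisely when it sits on an irrational part of the boundary. The strategy to overcome this is to use $D^2<0$ together with the fibration $F$ to force $S$ to have a finitely generated effective cone: since $-K_S+rF$ is pseudo-effective but not big-and-movable in the naive sense, $S$ cannot have infinitely many $(-1)$ or $(-2)$-type curves in the fiber direction, and I would argue that the Zariski decomposition of any pseudo-effective class has an effective positive part and that the negative part is a finite sum of curves, making the whole class effective.

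Finally, with effectivity of the full pseudo-effective cone established, the particular conclusion $m(-K_S+rF)$ effective for some $m\geq 1$ is immediate from the hypothesis that $-K_S+rF$ is pseudo-effective. I would close by checking the rationality/denominators: because $\Eff(S)$ is rational polyhedral, clearing denominators gives the integer $m$, completing the proof. The delicate step remains the polyhedrality of $\Eff(S)$, which I would justify through the negativity $D^2<0$ forcing the numerical setup of a surface whose Mori cone is finitely generated in the relevant chamber.
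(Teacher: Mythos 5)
Your numerical computation is the same as the paper's and is correct: $K_S^2=8-\delta_S$, $K_S\cdot F=-2$, so $(-K_S+rF)^2=8-\delta_S+4r<0$ under the hypothesis. But from that point on your plan goes astray. First, a pseudo-effective class with negative self-intersection is certainly not big (it cannot lie in the interior of $\Eff(S)$), so the sentence claiming $D$ is big contradicts the very inequality you just derived. More importantly, you set up the whole argument on the geometric model $\mu:\overline{S}\to\mathbb{F}_e$ with the $\delta_S$ exceptional curves $E_i$, and then try to prove that the effective cone of that blow-up is rational polyhedral. That is the wrong arena: the blow-up of a Hirzebruch surface at many general points need not have a polyhedral effective cone, and nothing in your sketch actually closes that gap --- you acknowledge it yourself as ``the delicate step.''

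The idea you are missing is that $S$ is a \emph{minimal} conic bundle over the (non-closed) field $k$, so its relative Picard number is one and $\rho(S)=2$ over $k$; the blow-down to $\mathbb{F}_e$ only exists after passing to $\overline{k}$ and is irrelevant here. With $\rho(S)=2$ the argument is short: choose effective $\mathbb{Q}$-divisors $D_t$ converging to $D=-K_S+rF$; by continuity $D_t^2<0$ for some $t$, hence some irreducible component $C$ of $D_t$, defined over $k$, satisfies $C^2<0$. A two-dimensional closed cone of curves containing the classes $F$ (with $F^2=0$) and $C$ (with $C^2<0$) must be exactly $\langle F,C\rangle$, both generators effective, so every pseudo-effective $\mathbb{Q}$-class is a non-negative rational combination of effective classes and is therefore effective; applying this to $D$ gives the final claim. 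Your Zariski-decomposition/Mori-dream detour is not needed and, as written, would not go through on the geometric model.
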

\begin{proof}
Let $D_t$ be a sequence of effective $\mathbb{Q}$-divisors converging to $-K_S+rF$. Since $K_S^2 = 8-\delta_S$ and $K_S\cdot F = -2$, $\delta_S > 8 + 4r$ yields 
$$(-K_S+rF)^2 = 8-\delta_S + 4r < 0.$$
Therefore, $D_t^2 < 0$ for some $t$, and hence there exists an irreducible curve $C$, defined over $k$, and contained in the support of $D_t$ such that $C^2 <0$.  

Now, to conclude it is enough to note that since $F^2 = 0$ and $C^2 < 0$ the cone of curves of $S$ is the closed cone generated by $F$ and $C$, so all pseudo-effective $\mathbb{Q}$-divisors are effective.
\end{proof}

Let $k$ be a field, $\overline{k}$ its algebraic closure and $\Gal(\overline{k}/k)$ the corresponding Galois group. For a projective variety $X$ over $k$ we will denote by
$$\overline{X} := X\times_{\Spec(k)}\Spec(\overline{k})$$
its algebraic closure. 

\begin{Remark}\label{cbc}
If $D$ is a Cartier divisor on $X$ and $\overline{D}$ is the divisor on $\overline{X}$ obtained by base change then
$$H^0(\overline{X},\mathcal{O}_{\overline{X}}(\overline{D}))\cong H^0(X,\mathcal{O}_{X}(D))\otimes_{k}\overline{k}.$$ 
This is indeed a consequence of Cohomology and Base Change \cite[Theorem 12.11]{Ha77}.
\end{Remark}

The following result is a consequence of Kleiman’s criterion for ampleness over an arbitrary field \cite[Theorem 3.9]{Ke03}. However, for the convenience of the reader, we present an alternative proof.  

\begin{Lemma}\label{nef_K}
Let $X$ be a projective variety over a field $k$, and $D$ a nef divisor on $X$. Then $\overline{D}\subset\overline{X}$ is nef as well, and $D$ is a limit of ample divisors on $X$.
\end{Lemma}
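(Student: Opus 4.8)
The statement splits into two claims: first that nefness is preserved under the base change $\rho\colon\overline X\to X$, and second that a nef divisor lies in the closure of the ample cone of $X$. My plan is to prove the first claim by pushing intersection numbers on $\overline X$ down to intersection numbers on $X$ along $\rho$, and then to deduce the second claim by transporting the classical Kleiman theorem over $\overline k$ back down to $k$, so that the genuinely arithmetic input (Kleiman over a non-closed field) is never used.

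For the first claim I would use that $\overline D=\rho^*D$ and that nefness can be tested against irreducible curves. Let $\overline C\subset\overline X$ be an irreducible curve. Since $\rho$ is integral with zero-dimensional fibres it contracts nothing, so the image $C:=\rho(\overline C)\subset X$ is an irreducible curve and $\rho|_{\overline C}\colon\overline C\to C$ is a finite surjective morphism of positive degree. Because $\overline D|_{\overline C}=(\rho|_{\overline C})^{*}(D|_C)$, the degree formula for pullbacks of line bundles along finite morphisms of curves gives
\[
\overline D\cdot\overline C=\deg(\rho|_{\overline C})\,(D\cdot C)\ge 0,
\]
using that $D$ is nef on $X$. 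As $\overline C$ was arbitrary, $\overline D$ is nef. To make the bookkeeping explicit one may first descend $\overline C$ to a geometrically integral curve over a finite extension $k'/k$, where $\rho_{k'}\colon X_{k'}\to X$ is finite flat of degree $[k':k]$; the $\Gal(\overline k/k)$-conjugates of $\overline C$ all meet $\overline D$ in the same number, since $\overline D$ is Galois invariant, which is what lets the intersection number on $\overline X$ be recovered from one on $X$.

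For the second claim I would fix an ample divisor $A$ on the projective variety $X$, so that $\overline A$ is ample on $\overline X$. By the first claim $\overline D$ is nef, and over the algebraically closed field $\overline k$ the classical Kleiman criterion identifies the nef cone with the closure of the ample cone, whence $\overline D+\varepsilon\overline A$ is ample on $\overline X$ for every rational $\varepsilon>0$. Since $\overline{D+\varepsilon A}=\overline D+\varepsilon\overline A$ and ampleness is insensitive to the field extension $k\subset\overline k$, the divisor $D+\varepsilon A$ is ample on $X$. Letting $\varepsilon\to 0$ then exhibits $D$ as a limit of ample $\mathbb{Q}$-divisors on $X$.

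The routine ingredients are the degree formula for line bundles on curves and the standard validity of Kleiman's theorem over $\overline k$. The step I expect to require the most care is the descent of ampleness along $k\subset\overline k$, together with the compatibility of the whole construction with scaling: one must check that $\overline D+\varepsilon\overline A$ really is the base change of a class on $X$ and invoke that ampleness may be tested after the faithfully flat base change $\rho$. This descent, rather than any computation, is the crux that keeps the argument over the non-closed field $k$ honest.
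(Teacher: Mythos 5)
Your argument is correct and follows essentially the same route as the paper: nefness of $\overline{D}$ is tested by relating $\overline{D}\cdot\overline{C}$ to the intersection of $D$ with the $k$-rational union of the Galois conjugates of $\overline{C}$, and the approximation by ample divisors comes from adding small multiples of an ample $A$, invoking Kleiman over $\overline{k}$, and descending ampleness back to $X$. The only quibble is your displayed degree formula: $\overline{k}(\overline{C})/k(C)$ is not a finite extension, and the correct relation, obtained exactly via the Galois-orbit bookkeeping you describe afterwards, is $D\cdot C=(r+1)\,\overline{D}\cdot\overline{C}$ where $r+1$ is the number of conjugates, so the positive constant sits on the other side of the equation --- which of course changes nothing about the sign and hence about the conclusion.
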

\begin{proof}
Since $D$ is nef we have that $D\cdot C\geq 0$ for all curves $C\subset X$ defined over $k$. Let $C_0\subset \overline{X}$ be a curve, and $C_1,\dots,C_r$ its conjugates with respect to the action of $\Gal(\overline{k}/k)$. Then $C = C_0\cup C_1\cup\dots C_r\subset\overline{X}$ is defined over $k$, and hence $D\cdot C = D\cdot C_0 + D\cdot C_1 +\dots + D\cdot C_r\geq 0$. 

Furthermore, since $D$ is defined over $k$ it is $\Gal(\overline{k}/k)$-invariant, and since the $C_i$ are conjugates we conclude that $D\cdot C = (r+1)(D\cdot C_0)$. Hence, $D\cdot C_0\geq 0$ and $D$ is nef over $\overline{k}$. If $A$ is an ample divisor on $X$ and $\overline{A}$ is the corresponding ample divisor on $\overline{X}$ the sequence of ample divisors $t\overline{A}+D$ converges to $D$ on $\overline{X}$, and hence the divisors $tA+D$ yield a sequence of ample divisors on $X$ converging to $D$.  
\end{proof}

\begin{Remark}\label{ps_K}
Let $X$ be a projective variety over a field $k$, and $D$ a nef divisor on $X$. Then $D$ is pseudo-effective. Indeed by Lemma \ref{nef_K} the nef divisor $D$ is a limit of a sequence of ample divisors $D_t$. Hence, the $\mathbb{Q}$-divisors $D_t$ are effective, and $D$ is then pseudo-effective.
\end{Remark}

Let $\pi:Z\rightarrow\mathbb{P}^{n-1}$ be a minimal $n$-fold conic bundle, and consider the following incidence variety
\[
  \begin{tikzpicture}[xscale=1.5,yscale=-1.5]
    \node (A0_1) at (1, 0) {$W = \{(z,L)\: | \: z\in \pi^{-1}(L)\}\subseteq Z\times \mathbb{G}(1,n-1)$};
    \node (A1_0) at (0, 1) {$Z$};
    \node (A1_2) at (2, 1) {$\mathbb{G}(1,n-1)$};
    \path (A0_1) edge [->]node [auto] {$\scriptstyle{\psi}$} (A1_2);
    \path (A0_1) edge [->]node [auto,swap] {$\scriptstyle{\phi}$} (A1_0);
  \end{tikzpicture}
  \]
The generic fiber $S_Z:=W_{\eta}$ of $\psi: W\rightarrow\mathbb{G}(1,n-1)$ is a minimal surface conic bundle over the function field $k(t_1,\dots,t_{2n-4})$. We will denote by 
$$\overline{S}_Z := S_Z \times_{\Spec(k(t_1,\dots,t_{2n-4}))}\Spec(\overline{k(t_1,\dots,t_{2n-4})})$$ 
the algebraic closure of $S_Z$. Finally, set $S_L := \pi^{-1}(L)$ where $L\subset\mathbb{P}^{n-1}$ is a line. 

\begin{Lemma}\label{can_re}
Set $-K_{Z|S_Z} := \phi^{*}(-K_{Z})_{|S_Z}$. Then $-K_{Z|S_Z} = -K_{S_Z} + (n-2)F$.
\end{Lemma}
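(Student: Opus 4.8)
The plan is to reduce the statement about the generic fiber $S_Z$ to a concrete computation on the surface $S_L = \pi^{-1}(L)$ for a general line $L\subset\mathbb{P}^{n-1}$, and then to apply adjunction. First I would observe that over the fiber $\psi^{-1}([L])$ the projection $\phi$ restricts to an isomorphism onto $S_L = \pi^{-1}(L)\subset Z$, so that $-K_{Z|S_Z}$ is, after specialization to a general line, nothing but the restriction $(-K_Z)|_{S_L}$. Thus it suffices to prove the equality $(-K_Z)|_{S_L} = -K_{S_L} + (n-2)F$ for general $L$ and then pass back to the generic point of $\mathbb{G}(1,n-1)$.

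Next I would realize $S_L$ as a complete intersection inside $Z$. Writing $H$ for the hyperplane class on $\mathbb{P}^{n-1}$, a general line $L$ is the transverse intersection of $n-2$ hyperplanes, so $S_L = \pi^{-1}(L)$ is cut out in $Z$ by the $n-2$ divisors $\pi^*H_1,\dots,\pi^*H_{n-2}$, each linearly equivalent to $\pi^*H$. For general $L$ these pullbacks meet transversally along a smooth surface (by a Bertini argument as in Remark \ref{con_fib}), so adjunction applies and gives
$$K_{S_L} = \Big(K_Z + \sum_{i=1}^{n-2}\pi^*H_i\Big)\Big|_{S_L} = K_Z|_{S_L} + (n-2)\,\pi^*H|_{S_L}.$$

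Finally I would identify the correction term. The restriction $\pi|_{S_L}:S_L\to L\cong\mathbb{P}^1$ is exactly the surface conic bundle structure on $S_L$, and $\pi^*H|_{S_L} = (\pi|_{S_L})^*(H|_L)$. Since $H|_L$ is a single point on $L\cong\mathbb{P}^1$, its pullback is the fiber class $F$ of the surface conic bundle, so $\pi^*H|_{S_L} = F$. Substituting into the adjunction formula and negating yields $(-K_Z)|_{S_L} = -K_{S_L} + (n-2)F$, which specializes to the desired equality $-K_{Z|S_Z} = -K_{S_Z} + (n-2)F$. The only delicate point is ensuring that adjunction and the transversality of the divisors $\pi^*H_i$ hold on the nose, that is, that $S_L$ is a genuine smooth complete intersection for general $L$ so that the formula is literally an equality of divisor classes; this is where I would invoke generic smoothness together with the Bertini-type argument already used in Remark \ref{con_fib}.
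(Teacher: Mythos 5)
Your argument is correct and is in substance the same as the paper's: both reduce the statement to the surface $S_L=\pi^{-1}(L)$ over a general line and identify $\det(\mathcal{N}_{S_L/Z})=(n-2)F$ coming from the pulled-back hyperplanes, the only difference being that you apply complete-intersection adjunction in one step where the paper runs an induction on $n$ via the normal bundle sequence for $S_L\subset Z_H\subset Z$. There is no gap; the transversality/smoothness caveat you flag is exactly the (implicit) genericity assumption the paper also relies on.
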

\begin{proof}
Let $L\subset\mathbb{P}^{n-1}$ be a general line. When $n = 3$ we have $\mathcal{N}_{S_L/Z} = S_{L|S_L} = F$. We will prove, by induction on $n$, that $\det(\mathcal{N}_{S_L|Z}) = (n-2)F$ for all $n\geq 3$.

Take a general hyperplane $H\subset\mathbb{P}^{n-1}$ and set $Z_{H}:= \pi^{-1}(H)$. Then $\pi_{|Z_H}:Z_H\rightarrow H$ is a conic bundle on dimension $n-1$ over $\mathbb{P}^{n-2}$. Take $L\subset H$ a general line, and consider the exact sequence 
$$0\rightarrow \mathcal{N}_{S_L/Z_H}\rightarrow \mathcal{N}_{S_L/Z}\rightarrow \mathcal{N}_{Z_H/Z|S_L}\rightarrow 0.$$
We have $\mathcal{N}_{Z_H/Z|S_L} = F$, and by induction $\det(\mathcal{N}_{S_L/Z_H}) = (n-3)F$. Hence, $\det(\mathcal{N}_{S_L|Z}) = (n-2)F$. 

Finally, from the exact sequence
$$0\rightarrow \mathcal{N}_{S_L/Z}\rightarrow \mathcal{T}_{Z|S_L}\rightarrow \mathcal{T}_{S_L}\rightarrow 0$$
we get that $-K_{Z|S_L} = -K_{S_L} + \det(\mathcal{N}_{S_L/Z}) = -K_{S_L} + (n-2)F$. 
\end{proof}

Our next aim is to prove the emptiness of certain linear systems
on the ruled surfaces $\F_e$. 

\begin{Notation}
We denote by $\mathcal{L}_{m,d}(\mathbb{F}_e)$ the linear system of curves in $|-mK_{\mathbb{F}_e}+m(n-2)F|$ having multiplicity $m$ at $d$ general points of $\mathbb{F}_e$. 
\end{Notation}
 
The first step in this direction is the following result.

\begin{Lemma}\label{lyn_Fe}
Assume that $n\geq 2$, and $e\leq n$. If $d = 4n$ then $\mathcal{L}_{m,d}(\mathbb{F}_e)$ has at most one section for all $m\geq 1$. Furthermore, if $d\geq 4n+1$ then  
$\mathcal{L}_{m,d}(\mathbb{F}_e)$ is empty for all $m\geq 1$.
\end{Lemma}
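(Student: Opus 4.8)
The plan is to convert the linear system $\mathcal{L}_{m,d}(\mathbb{F}_e)$ into a complete linear system on a surface and then detect effectivity through a single numerical invariant. First I would blow up the $d$ general points, $\mu\colon S\to\mathbb{F}_e$, with exceptional divisors $E_1,\dots,E_d$. Writing $F$ also for $\mu^{*}F$ and using $K_S=\mu^{*}K_{\mathbb{F}_e}+\sum_i E_i$, the requirement of multiplicity $m$ at each $p_i$ identifies $\mathcal{L}_{m,d}(\mathbb{F}_e)$ with $|mD|$, where $D:=-K_S+(n-2)F$; in particular the two systems have the same number of sections. Since $K_S^2=K_{\mathbb{F}_e}^2-d=8-d$, $-K_S\cdot F=2$ and $F^2=0$, I compute
$$D^2=K_S^2+4(n-2)=8-d+4(n-2)=4n-d,$$
so that $D^2=0$ when $d=4n$ and $D^2<0$ when $d\ge 4n+1$. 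Everything then reduces to deciding whether a multiple of $D$ can be effective.

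Next I would pass to the relative minimal model over a function field. By the construction in \ref{gen}, generality of $p_1,\dots,p_d$ means that $S\cong\overline{S}_Z$ is the geometric generic fibre of $\psi\colon W\to\mathbb{G}(1,n-1)$, so that $S_Z$ is a \emph{minimal} conic bundle over $K:=k(t_1,\dots,t_{2n-4})$ with discriminant of degree $\delta_{S_Z}=d$. By Remark \ref{cbc} base change preserves $h^0$, so $h^0(S,mD)=h^0(S_Z,mD_Z)$ with $D_Z:=-K_{S_Z}+(n-2)F$, and I may argue on $S_Z$, where $\rho(S_Z)=2$. Hence $\NEbar(S_Z)$ is a two-dimensional cone one of whose extremal rays is the fibre class $F$; call the other ray $R$. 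The crucial structural input is that minimality forces $C\cdot F$ to be even for every curve $C$ defined over $K$, which is exactly what prevents the cheap negative curves (the negative section $C_0$ and the exceptional curves $E_i$, all of odd or zero intersection with $F$, or permuted by Galois) from being defined over $K$.

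For $d\ge 4n+1$ I would argue by contradiction: if some $mD_Z$ were effective, then $D_Z$ would be pseudo-effective, and since $\delta_{S_Z}=d>4n=8+4(n-2)$, the argument of Lemma \ref{K_1} yields an irreducible curve $C$ with $C^2<0$ spanning $R$, so that $\NEbar(S_Z)=\langle F,C\rangle$ and $D_Z\equiv\alpha F+\gamma C$ with $\alpha,\gamma\ge 0$. Intersecting with $F$ gives $\gamma\,(C\cdot F)=D_Z\cdot F=2$, and substituting into $D_Z^2=4n-d$ produces
$$\alpha=n-\tfrac{d}{4}-\frac{C^2}{(C\cdot F)^2}.$$
Thus $\alpha<0$, a contradiction, follows as soon as $-C^2<\tfrac14(C\cdot F)^2$, giving $|mD_Z|=\emptyset$ for all $m\ge 1$. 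For $d=4n$ one has $D_Z^2=0$ with $D_Z$ isotropic: if $h^0(mD_Z)\ge 2$ its movable part would be a nef class of square zero numerically proportional to $D_Z$, hence a second fibration whose fibres meet $F$ (as $D_Z\cdot F=2>0$); excluding such a fibration on the minimal $S_Z$ forces $h^0(mD_Z)\le 1$.

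The heart of the matter, and the step I expect to be the main obstacle, is the inequality $-C^2<\tfrac14(C\cdot F)^2$ for every irreducible curve $C$ defined over $K$ on the general $S_Z$ (together with the parallel exclusion of a second fibration in the boundary case $d=4n$). Writing $\overline{C}=\sum_i\overline{C}_i$ as a Galois orbit of geometrically irreducible curves on $\overline{S}_Z=\mathrm{Bl}_{p_1,\dots,p_d}\mathbb{F}_e$, the cross terms $\overline{C}_i\cdot\overline{C}_j\ge 0$ give $C^2\ge\sum_i\overline{C}_i^2$, and the content is that for points in general position—this is where the genericity and the hypothesis $e\le n$ genuinely enter—no geometrically irreducible component of fibre-degree $f_i$ can be as negative as $-\tfrac14 f_i^2$. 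Equivalently, the task is to bound the position of the second extremal ray $R$ of $\NEbar(S_Z)$ in terms of $d$, ruling out a negative multisection that would drag $R$ past the class $D_Z$; making this bound effective is the crux on which both statements of the lemma rest.
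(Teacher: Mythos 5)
Your reduction of $\mathcal{L}_{m,d}(\mathbb{F}_e)$ to $|mD|$ on the blow-up with $D=-K_S+(n-2)F$ and the computation $D^2=4n-d$ are correct, and passing to the minimal model $S_Z$ over $k(t_1,\dots,t_{2n-4})$ is consistent with how the paper uses this lemma later. But the argument has a genuine gap exactly where you flag it: everything hinges on the unproved inequality $-C^2<\bigl(\tfrac{d}{4}-n\bigr)(C\cdot F)^2$ for the irreducible curve $C$ spanning the second extremal ray of $\NEbar(S_Z)$, and this is not a minor technicality --- it is essentially equivalent to the content of the lemma. Bounding the negative ray of the blow-up of $\mathbb{F}_e$ at $d$ general points is a statement of SHGH/Nagata type, and minimality over $K$ does not dispose of the dangerous classes: for instance a Galois-stable pair of disjoint $(-1)$-sections gives an irreducible $C$ over $K$ with $C^2=-2$, $C\cdot F=2$, violating $-C^2<\tfrac14(C\cdot F)^2$; one would have to show such classes cannot be extremal for the general $S_Z$, which is again the same problem. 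The boundary case $d=4n$ has the same defect: you do not actually exclude the second fibration whose existence you invoke. Note also that the hypothesis $e\leq n$, which you correctly predict must enter, plays no role in your argument, and for $n=2$ the function field $k(t_1,\dots,t_{2n-4})$ is trivial, so the two-ray cone is unavailable there.

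The paper's proof avoids the cone of curves entirely and is worth contrasting with your plan. It uses $e\leq n$ to produce an irreducible curve $\Gamma_e\in|-K_{\mathbb{F}_e}+(n-2)F|$ of genus $n-1\geq 1$ with $\Gamma_e^2=4n$, then \emph{specializes} the $4n$ points to general points of $\Gamma_e$. On the blow-up, the strict transform $\widetilde{\Gamma}_e\sim -K_{X_{4n}}+(n-2)F$ has square zero, and the restriction of $m(-K_{X_{4n}}+(n-2)F)$ to $\widetilde{\Gamma}_e$ is a degree-zero line bundle which is non-torsion because the points are general on a positive-genus curve; the restriction exact sequence then gives $h^0\bigl(m(-K_{X_{4n}}+(n-2)F)\bigr)=1$ for all $m$ by induction on $m$, and semicontinuity transfers the bound "at most one section" to general points of $\mathbb{F}_e$. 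Emptiness for $d\geq 4n+1$ follows by imposing one further general base point. If you want to salvage your approach you would need an independent proof of the extremal-ray bound; otherwise the specialization-plus-non-torsion-restriction technique is the mechanism that actually closes the argument.
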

\begin{proof}
For $e\leq n$ the linear system $|-K_{\mathbb{F}_e}+(n-2)F|$ contains
a smooth and irreducible curve $\Gamma_e$. For $e = 0$ this is a curve
of bidegree $(2,n)$ on $\mathbb{F}_0\cong \mathbb{P}^1\times
\mathbb{P}^1$. For $e = 1$ the curve $\Gamma_1$ is the strict
transform of an irreducible curve of degree $n+1$ in $\mathbb{P}^2$
with a point of multiplicity $n-1$. For $e = 2$ we may take as
$\Gamma_2$ the strict transform of $\Gamma_1$ via an elementary
transformation centered at a point of intersection of
$\Gamma_1$ with $C_0\subset\mathbb{F}_1$ as described in (\ref{el_2}). In general for $e
= k\leq n$ we take as $\Gamma_k$ the strict transform of
$\Gamma_{1}$ via a sequence of elementary transformations centered at
$k-1$ points in $\Gamma_1\cap C_0\subset \mathbb{F}_1$.

A straightforward computation gives $\Gamma_{e}^2 = 4n$ and
$K_{\mathbb{F}e}\cdot \Gamma_{e} = -2n-4$, then the adjunction formula
yields 
$$2g(\Gamma_e)-2 = \Gamma_e\cdot(\Gamma_e+K_{\mathbb{F}e}) = 2n-4$$
and hence $g(\Gamma_e) = n-1\geq 1$.

Now, fix $d = 4n$ general points $p_1,\dots,p_{4n}\in\Gamma_e$, and
let $X_{4n}$ be the blow-up of $\mathbb{F}_{e}$ at these points with
exceptional divisors $E_i$. Denote by $\widetilde{\Gamma}_e$ the
strict transform of $\Gamma_e$. Since $\widetilde{\Gamma}_e\sim
-K_{X_{4n}}+(n-2)F$, twisting the structure exact sequence of $\widetilde{\Gamma}_e$
$$0\rightarrow \mathcal{O}_{X_{4n}}(-\widetilde{\Gamma}_e)\rightarrow\mathcal{O}_{X_{4n}}\rightarrow\mathcal{O}_{\widetilde{\Gamma}_e}\rightarrow 0$$ 
 by
$\mathcal{O}_{X_{4n}}((m+1)(-K_{X_{4n}}+(n-2)F))$ we get the following
exact sequence 
\begin{footnotesize}
\begin{equation}\label{f_es}
0\rightarrow \mathcal{O}_{X_{4n}}(m(-K_{X_{4n}}+(n-2)F))\rightarrow\mathcal{O}_{X_{4n}}((m+1)(-K_{X_{4n}}+(n-2)F))\rightarrow\mathcal{O}_{\widetilde{\Gamma}_e}((m+1)(-K_{X_{4n}}+(n-2)F))\rightarrow 0.
\end{equation}
\end{footnotesize}
The first piece of the long exact sequence in cohomology induced by (\ref{f_es}) reads as follows
$$
\begin{array}{ll}
0\rightarrow & H^0(X_{4n},\mathcal{O}_{X_{4n}}(m(-K_{X_{4n}}+(n-2)F))) \rightarrow H^0(X_{4n},\mathcal{O}_{X_{4n}}((m+1)(-K_{X_{4n}}+(n-2)F)))  \\ 
 & \rightarrow H^0(\widetilde{\Gamma}_e,\mathcal{O}_{\widetilde{\Gamma}_e}((m+1)(-K_{X_{4n}}+(n-2)F)))
\end{array} 
$$
Note that since there does not exist a curve in
$|-K_{\mathbb{F}_e}+(n-2)F|$ through $4n$ general points of
$\mathbb{F}_e$, and we took $p_1,\dots,p_{4n}$ general on $\Gamma_e$
we have $h^0(X_{4n},\mathcal{O}_{X_{4n}}(-K_{X_{4n}}+(n-2)F)) =
1$. Now, assume that
$h^0(\widetilde{\Gamma}_e,\mathcal{O}_{\widetilde{\Gamma}_e}((m+1)(-K_{X_{4n}}+(n-2)F)))
= 0$ for all $m\geq 1$. Then  
$$H^0(X_{4n},\mathcal{O}_{X_{4n}}(m(-K_{X_{4n}}+(n-2)F))) \cong H^0(X_{4n},\mathcal{O}_{X_{4n}}((m+1)(-K_{X_{4n}}+(n-2)F)))$$
for all $m\geq 1$, and by induction on $m$ we conclude that
$h^0(X_{4n},\mathcal{O}_{X_{4n}}(m(-K_{X_{4n}}+(n-2)F))) = 1$ for all
$m\geq 1$.  

The divisor $\mathcal{O}_{\widetilde{\Gamma}_e}(-mK_{X_{4n}}+m(n-2)F)$
has degree zero, so
$$h^0(\widetilde{\Gamma}_e,\mathcal{O}_{\widetilde{\Gamma}_e}(-mK_{X_{4n}}+m(n-2)F))\in\{0,1\}$$
depending on whether
$\mathcal{O}_{\widetilde{\Gamma}_e}(-mK_{X_{4n}}+m(n-2)F)$ is trivial
or not, and 
$\mathcal{O}_{\widetilde{\Gamma}_e}(-mK_{X_{4n}}+m(n-2)F)$ is trivial
only if  $\mathcal{O}_{\widetilde{\Gamma}_e}(-K_{X_{4n}}+(n-2)F)$ is
torsion.  

Let $X_{4n-1}$ be the blow-down of the exceptional divisor
$E_{4n}$. Denote by $\Gamma_e^{'}$ the push-forward of
$\widetilde{\Gamma}_e$. Then
$\mathcal{O}_{\Gamma_e^{'}}(-K_{X_{4n-1}}+(n-2)F)$ is a divisor
$D^{'}$ on $\Gamma_e^{'}$ of degree one. Moreover, since
$\Gamma_e^{'}$ has positive genus, for a general choice of $p\in
\Gamma_e^{'}$ the divisor $D^{'} - p$ is not a torsion divisor. Hence,
$$h^0(\widetilde{\Gamma}_e,\mathcal{O}_{\widetilde{\Gamma}_e}(-mK_{X_{4n}}+m(n-2)F))
= 0$$
and
$$h^0(X_{4n},\mathcal{O}_{X_{4n}}(-mK_{X_{4n}}+m(n-2)F)) = 1$$
for all $m\geq 1$.  

Summing-up we proved that for $4n$ general points on a general section
of $|-K_{\mathbb{F}_e}+(n-2)F|$ the linear system of curves in
$|-mK_{\mathbb{F}_e}+m(n-2)F|$ having multiplicity $m$ at those $4n$
points has just one section for all $m\geq 1$. By semi-continuity we
conclude that the linear system of curves in
$|-mK_{\mathbb{F}_e}+m(n-2)F|$ having multiplicity $m$ at $4n$ general
points of $\mathbb{F}_e$ has at most one section for all $m\geq 1$.  

Finally, since by the previous part of the proof
$\mathcal{L}_{m,4n}(\mathbb{F}_e)$ has at most one section, after
imposing one more general base point we get an empty linear
system. Hence, if $d\geq 4n+1$ and the base points are general
$\mathcal{L}_{m,d}(\mathbb{F}_e)$ is empty for all $m\geq 1$.   
\end{proof}

\begin{thm}\label{th:4nempty}
If $n\geq 3$ and $e\leq n$ then the linear system
$\mathcal{L}_{m,d}(\mathbb{F}_{e})$ is empty for all $m\geq 1$ and
$d\geq 4n$.  
\end{thm}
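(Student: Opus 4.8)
The plan is to deduce the statement from Lemma \ref{lyn_Fe}. That lemma already gives emptiness of $\mathcal{L}_{m,d}(\mathbb{F}_e)$ for every $d\ge 4n+1$, so the only thing left to prove is that the \emph{boundary} system $\mathcal{L}_{m,4n}(\mathbb{F}_e)$, which Lemma \ref{lyn_Fe} controls only up to a single section, is in fact empty for all $m\ge1$. I would work on the blow-up $X_{4n}\to\mathbb{F}_e$ at the $4n$ general base points (as in the proof of Lemma \ref{lyn_Fe}) and set $D_1:=-K_{X_{4n}}+(n-2)F$, so that $\mathcal{L}_{m,4n}(\mathbb{F}_e)=|mD_1|$. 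A direct computation gives $D_1^2=0$ and $D_1\cdot K_{X_{4n}}=2(n-2)>0$; in particular $D_1$ is nef but not big. This is exactly the boundary case in which the mechanism of Lemma \ref{K_1} (producing a curve of negative self-intersection inside a limit of effective divisors) breaks down, so a genuinely cohomological argument is needed. I would argue by induction on $m$.

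For the base case $m=1$ I would use a plain dimension count. Since $-K_{\mathbb{F}_e}+(n-2)F\sim 2C_0+(n+e)F$ is nef, hence base-point-free, for $e\le n$, and $\dim|{-}K_{\mathbb{F}_e}+(n-2)F|=3n+2$, a set of $4n$ general simple points imposes independent conditions; as $4n\ge 3n+3>3n+2$ for $n\ge3$, no member passes through all of them and $\mathcal{L}_{1,4n}(\mathbb{F}_e)=\varnothing$. This is the input already used inside the proof of Lemma \ref{lyn_Fe}, and it is where the hypothesis $n\ge3$ first enters.

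For the inductive step I would reproduce, for \emph{general} points, the degree-zero mechanism that drives the proof of Lemma \ref{lyn_Fe}. Let $\Gamma\in|2C_0+(n+e)F|$ be the curve through $3n+2$ of the base points; it is smooth, irreducible, of genus $g=n-1$, and its strict transform $\widetilde\Gamma\subset X_{4n}$ satisfies $\widetilde\Gamma^2=n-2=g-1$, with $\O_{\widetilde\Gamma}(D_1)=N_{\widetilde\Gamma}$ the normal bundle, of degree $g-1$. Because the points are general, $N_{\widetilde\Gamma}=N_{\Gamma/\mathbb{F}_e}\big(-\textstyle\sum_{i=1}^{3n+2}p_i\big)$ is a \emph{general} line bundle of degree $g-1$, hence lies off the theta divisor and has no sections; likewise the degree-zero twist $N_{\Gamma/\mathbb{F}_e}(-\sum_{i=1}^{4n}p_i)$ is a general, hence non-torsion, point of $\Pic^0(\widetilde\Gamma)$, exactly as in Lemma \ref{lyn_Fe}. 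The plan is then to specialize a suitable number of the base points onto $\Gamma$ and to feed the Castelnuovo sequence
\[
0\to \O_{X_{4n}}\big(mD_1-\widetilde\Gamma\big)\to \O_{X_{4n}}(mD_1)\to \O_{\widetilde\Gamma}(mD_1)\to 0
\]
(twisted down at the specialized points) into cohomology: the trace on $\widetilde\Gamma$ becomes a general line bundle of degree $\le g-1$, hence without sections, while the residual system should reduce to a lower-$m$ system covered by the inductive hypothesis. Since the specialized configuration is more special than the generic one, semicontinuity of $h^0$ would then force $\mathcal{L}_{m,4n}(\mathbb{F}_e)=\varnothing$ for general points.

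The main obstacle is precisely the boundary equality $D_1^2=0$. Because $mD_1$ is nef, no effective curve on $X_{4n}$ meets it negatively, so no naive restriction can witness emptiness: restricting $mD_1$ to $\widetilde\Gamma$ leaves the \emph{positive} degree $m(n-2)$, and for $m\ge2$ the trace bundle $N_{\widetilde\Gamma}^{\otimes m}$ is itself effective. The whole content of the theorem is thus to show that the single section allowed by Lemma \ref{lyn_Fe} does not occur, equivalently that the superabundance $h^1(X_{4n},mD_1)$ — which Riemann--Roch forces to equal $m(n-2)$ as soon as a section exists — is killed by the generality of the points. Balancing the trace and the residual \emph{simultaneously} at the boundary is delicate, and I expect it to require a differential Horace-type specialization of the excess points onto $\Gamma$ rather than a straightforward one, so that the residual stays within reach of the inductive hypothesis; this bookkeeping is the step I would carry out with the most care.
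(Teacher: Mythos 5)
Your reduction to the case $d=4n$ via Lemma \ref{lyn_Fe}, your base case $m=1$ (the count $4n\geq 3n+3=h^0(\mathbb{F}_e,2C_0+(e+n)F)$ for $n\geq 3$), and your diagnosis of the difficulty (the divisor $D_1=-K_{X_{4n}}+(n-2)F$ is nef with $D_1^2=0$, so the problem is exactly to rule out the single section permitted by Lemma \ref{lyn_Fe}) are all correct and consistent with the paper. But the inductive step, which is the entire content of the theorem, is not carried out, and the mechanism you sketch cannot be made to work as stated. If $\Gamma$ is the unique curve of class $2C_0+(e+n)F$ through $3n+2$ of the points and you specialize $s$ further base points onto it, the trace of $mD_1$ on $\widetilde\Gamma$ has degree $m(n-2-s)$; to keep this $\leq g-1=n-2$ for \emph{all} $m$ you are forced to take $s=n-2$, i.e.\ to put all $4n$ points on $\Gamma$. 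That is precisely the specialization of Lemma \ref{lyn_Fe}, and there the system provably has $h^0=1$ for every $m$ (the section $m\widetilde\Gamma$ survives), so semicontinuity can only ever return ``at most one section,'' never emptiness. For any smaller $s$ the trace bundle has degree $>g-1$ for $m\gg 0$ and is effective, so restriction kills nothing; moreover the residual system $mD_1-\widetilde\Gamma$ carries multiplicity $m$ (not $m-1$) at the points off $\Gamma$, so it is not an instance of your inductive hypothesis. The ``differential Horace bookkeeping'' you defer is therefore not bookkeeping: it is the missing idea.

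The paper closes this gap with a genuinely different degeneration, following Ciliberto--Miranda: it degenerates the \emph{surface}, blowing up $\mathbb{F}_e\times\mathbb{A}^1$ at a point $p$ of the central fibre so that the new central fibre is $Y\cup Z$ with $Y$ the blow-up of $\mathbb{F}_e$ at $p$ and $Z\cong\mathbb{P}^2$ glued along a $(-1)$-curve $R$; it places $4n-4$ points on $Y$ and $4$ on $Z$. The system on $Y$ (multiplicity $2m$ at $p$, $m$ at the $4n-4$ points) is rigid, equal to $m\Gamma_e$ for an irreducible curve $\Gamma_e\in|-K_{\mathbb{F}_e}+(n-2)F|$ \emph{nodal} at $p$ with $g(\Gamma_e)=n-2\geq 1$ and $\widetilde\Gamma_e^2=0$ non-torsion; the system on $Z$ is the $m$-fold pencil of conics through $4$ points. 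Transversality of the restricted systems on $R$ then reduces the limit system to the two kernel systems (multiplicity $2m+1$ at $p$, resp.\ degree $2m-1$ in $\mathbb{P}^2$), both of which are visibly empty, and semicontinuity concludes. The multiplicity-$2m$ point at the node, which has no analogue in your single-surface specialization, is what finally excludes the one remaining section.
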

\begin{proof}
Thanks to Lemma~\ref{lyn_Fe} we have only to deal with the case  $d =
4n$. For this we use the degeneration method introduced in
\cite{CM98}, \cite{CM11} with a little variation that we now
explain. Consider the family $X$ obtained by blowing-up the family
$\mathbb{F}_e \times \mathbb{A}^1\rightarrow\mathbb{A}^1$ at a general
point, say $p$, lying on the central fiber. The new central fiber $X_0$ has then
two components: $Y$ isomorphic to $\mathbb{F}_e$ blown-up at a general
point $p\in\mathbb{F}_e$, and $Z$ isomorphic to
$\mathbb{P}^2$. Furthermore, $Y$ and $Z$ intersects along a smooth
rational curve $R$ which is a $(-1)$-curve in $Y$ and a line in $Z$.  

Choose $4n-4$ general points on $Y$ and four general points on
$Z$. We denote by $\mathcal{A}$ the strict transform of the linear
system of curves in $|m(-K_{\mathbb{F}_e}+(n-2)F)|$ having
multiplicity $2m$ at $p$ and $m$ at the images in $\mathbb{F}_e$ of
the chosen $4n-4$ general points on $Y$. Furthermore, let
$\mathcal{B}$ be the linear system of curves of degree $2m$ with
multiplicity $m$ at the four chosen general points on
$Z\cong\mathbb{P}^2$.  

Arguing as in the first part of the proof of Lemma \ref{lyn_Fe} we see
that $|-K_{\mathbb{F}_e}+(n-2)F|$ contains an irreducible curve
$\Gamma_e$ 
passing through the $4n-4$ fixed points and whose singular locus consists of a single node at $p$.

The condition $n\geq 3$ yields
$$g(\Gamma_e) = n-2\geq 1.$$
By construction $\Gamma_e^2 = 4n$, hence, if we
blow-up the node $p$ and the $4n-4$ fixed points, and denote by
$\widetilde{\Gamma}_e$ the strict transform of $\Gamma_e$ we have
$$\widetilde{\Gamma}_e^2 = 0.$$
This means that
$\mathcal{O}_{\widetilde{\Gamma}_e}(\widetilde{\Gamma}_e)$ is a line
bundle of degree zero but it is not torsion since the blown-up points
are general. Hence, for all $m\geq 1$ the linear system $\mathcal{A}$
consists of a single element, namely the curve $\Gamma_e$ taken with
multiplicity $m$.  

We continue as in the proof of \cite[Proposition
5.5.9]{CHMR13}. The restriction of the linear systems $\mathcal{A}$ and $\mathcal{B}$ to $R$ has
degree $2x$. Furthermore, $\mathcal{A}$ has dimension zero and
$\mathcal{B}$ has dimension $m$. Since $2m \geq m+1$ by transversality
of the restricted systems \cite[Section 3]{CM98} we get that the limit
linear system consists of the kernel systems $\mathcal{A}^{'}$ whose
elements are strict transforms of curves in
$|m(-K_{\mathbb{F}_e}+(n-2)F)|$ having multiplicity $2m+1$ at $p$ and
$m$ at the images in $\mathbb{F}_e$ of the $4n-4$ fixed points, and
$\mathcal{B}^{'}$ of curves of degree $2m-1$ with multiplicity $m$ at
the four chosen general points on $Z\cong\mathbb{P}^2$. 

Now, since $\mathcal{A}$ consists of a single curve which is the
strict transform of a curve having multiplicity exactly $2m$ at $p$ we
conclude that $\mathcal{A}^{'}$ is empty, Furthermore, since the
strict transform of $\mathcal{B}^{'}$, in the blow-up of $Z$ at the
four fixed points, intersects negatively the strict transform of a
general conic through the four points we get that $\mathcal{B}^{'}$ is
empty as well.  

Finally by semicontinuity, we conclude that the original linear system of curves in
$|m(-K_{\mathbb{F}_e}+(n-2)F)|$ having multiplicity $m$ at $4n$
general points of $\mathbb{F}_e$, with $n\geq 3$ and $e\leq n$, is
empty for all $m\geq 1$.    
\end{proof}

We conclude the part on surfaces with the following result.

\begin{Proposition}\label{d12}
Let $S\rightarrow\mathbb{P}^1$ be a minimal conic bundle with
$\delta_S = 4n$. Then $-K_S + (n-2)F$ is
pseudo-effective. Furthermore, if $n\geq 3$ and $S\rightarrow\mathbb{P}^1$ is general then $|-mK_S +m(n-2)F|$ is empty for all $m\geq 1$.
\end{Proposition}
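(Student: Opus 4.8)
The plan is to establish the two assertions separately, matching each to the surface-level machinery already developed. First I would prove that $-K_S + (n-2)F$ is pseudo-effective. A minimal conic bundle surface $S \to \mathbb{P}^1$ with $\delta_S = 4n$ is obtained by blowing up $\mathbb{F}_e$ (for a suitable $e$) at $4n$ points, and by the normalization in Paragraph \ref{el_2} we may arrange $e \leq n$. On $\mathbb{F}_e$ the divisor class $-K_{\mathbb{F}_e} + (n-2)F$ is the class whose sections cut out the curves $\Gamma_e$ constructed in the proof of Lemma \ref{lyn_Fe}; since $\Gamma_e$ is an honest irreducible curve of the right class, $-K_{\mathbb{F}_e} + (n-2)F$ is effective, hence pseudo-effective, on $\mathbb{F}_e$. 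The point is then to check that this class remains pseudo-effective after passing to $S$. Because the blow-down morphism $S \to \mathbb{F}_e$ pulls back the pseudo-effective class $-K_{\mathbb{F}_e} + (n-2)F$ to $-K_S + (n-2)F$ plus an effective exceptional combination (the standard adjunction bookkeeping for a blow-up, using $K_S = \mu^*K_{\mathbb{F}_e} + \sum E_i$), and because pushforward and pullback along a birational morphism preserve pseudo-effectiveness of the relevant class, I would conclude that $-K_S + (n-2)F$ is pseudo-effective on $S$. I should be a little careful to phrase this over the possibly non-closed field $k$, but Lemmas \ref{nef_K} and \ref{ps_K} handle the passage between $k$ and $\overline{k}$, so pseudo-effectiveness descends.

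For the second assertion I would simply translate Theorem \ref{th:4nempty} into the language of $S$. The linear system $|-mK_S + m(n-2)F|$ on the surface $S$, which is $\mathbb{F}_e$ blown up at the $4n$ discriminant points $p_1,\dots,p_{4n}$, corresponds exactly to the strict transforms of curves in $|-mK_{\mathbb{F}_e} + m(n-2)F|$ having multiplicity $m$ at each $p_i$; that is precisely the definition of $\mathcal{L}_{m,4n}(\mathbb{F}_e)$. Here the genericity hypothesis on $S \to \mathbb{P}^1$ guarantees, via Paragraph \ref{gen}, that the $4n$ blown-up points are in general position, which is exactly the hypothesis feeding Theorem \ref{th:4nempty}. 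Since $n \geq 3$ and we have arranged $e \leq n$, Theorem \ref{th:4nempty} gives $\mathcal{L}_{m,4n}(\mathbb{F}_e) = \emptyset$ for all $m \geq 1$, and therefore $|-mK_S + m(n-2)F| = \emptyset$ for all $m \geq 1$.

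The main obstacle, and the step I would spend the most care on, is the bookkeeping that makes the identification $-mK_S + m(n-2)F \leftrightarrow \mathcal{L}_{m,4n}(\mathbb{F}_e)$ precise. One must verify that the anticanonical class transforms correctly under the blow-down $\mu : S \to \mathbb{F}_e$, namely that $-mK_S = \mu^*(-mK_{\mathbb{F}_e}) - m\sum E_i$, so that imposing multiplicity exactly $m$ at the blown-up points is the same as asking a section of $-mK_S + m(n-2)F$ to descend. The $F$ term pulls back cleanly since $F$ is disjoint from the generic exceptional structure, and the multiplicity-$m$ conditions at the $4n$ points account exactly for the $-m\sum E_i$ correction. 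Once this dictionary is set up, both claims follow immediately from the surface results already proved; there is no new geometric input needed beyond correctly invoking Lemma \ref{lyn_Fe}, Theorem \ref{th:4nempty}, and the field-of-definition lemmas.
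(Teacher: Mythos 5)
Your reduction of the second statement to Theorem \ref{th:4nempty} is essentially the paper's argument, but your proof of the first statement (pseudo-effectiveness) does not work. Writing $\mu:\overline{S}\to\mathbb{F}_e$ for the blow-down, the adjunction bookkeeping you invoke gives
$$\mu^*\bigl(-K_{\mathbb{F}_e}+(n-2)F\bigr) \;=\; \bigl(-K_{\overline{S}}+(n-2)F\bigr)+\sum_{i=1}^{4n}E_i,$$
so $-K_{\overline{S}}+(n-2)F$ is the pullback of a pseudo-effective class \emph{minus} an effective one, and such a difference does not inherit pseudo-effectiveness; that is precisely the mechanism by which blowing up many points destroys positivity. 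If your reasoning were valid it would apply verbatim to $d=4n+1$ blown-up points, where the combination of Lemma \ref{K_1} and Lemma \ref{lyn_Fe} shows that $-K_S+(n-2)F$ is \emph{not} pseudo-effective; it would even show that $-K_S+(n-2)F$ is effective (since $-K_{\mathbb{F}_e}+(n-2)F=2C_0+(e+n)F$ is), contradicting the second half of the very proposition you are proving. The class sits exactly on the boundary of the pseudo-effective cone, since $(-K_S+(n-2)F)^2=8-4n+4(n-2)=0$, and the correct argument is the direct intersection-theoretic one: this vanishing, together with $(-K_S+(n-2)F)\cdot F=2>0$ and the fact that minimality forces $\overline{NE}(S)$ to have only two rays, shows that $-K_S+(n-2)F$ is nef, hence pseudo-effective by Lemma \ref{ps_K}.

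There is a secondary gap feeding into your second part: you cannot simply ``arrange'' $e\leq n$ by appealing to Paragraph \ref{el_2}, since elementary transformations change which points are blown up and are not at your disposal here. The paper obtains $e\leq n$ as a \emph{consequence} of the nefness just established: choosing the blow-down so that no blown-up point lies on $C_0$, one has $-K_{\overline{S}}+(n-2)F\sim 2C_0+(e+n)F-\sum_i E_i$, whence $(-K_{\overline{S}}+(n-2)F)\cdot\widetilde{C}_0=n-e\geq 0$. So the first statement is a logical prerequisite for invoking Theorem \ref{th:4nempty} with $e\leq n$ in the second, and it is exactly the part of your proof that needs to be replaced. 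Once nefness and $e\leq n$ are in place, your dictionary between $|-mK_S+m(n-2)F|$ and $\mathcal{L}_{m,4n}(\mathbb{F}_e)$, together with the descent to $k$ via Remark \ref{cbc}, matches the paper.
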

\begin{proof} By construction  $(-K_S + (n-2)F)\cdot F = 2$ and
$$(-K_S + (n-2)F)^2 = 8-4n+4(n-2) = 0.$$
The conic bundle $S$ is minimal, then its cone of curves has only two
rays. Assume that there exists a curve $C\subset S$ such that $(-K_S + (n-2)F)\cdot C < 0$. Then the ray spanned by $-K_S + (n-2)F$ lies in between the ray spanned by $F$ and that spanned by $C$. So $-K_S + (n-2)F$ is $\mathbb{Q}$-effective and since $(-K_S + (n-2)F)^2 = 0$ it spans the other extremal ray of the effective cone. A contradiction, since $C$ would then lie outside of the effective cone. Therefore, $-K_S + (n-2)F$ is nef and hence pseudo-effective by Remark~\ref{ps_K}.

Let $\overline{S}$ be the algebraic closure of $S$, and take a
blow-down morphism $\overline{S}\rightarrow\mathbb{F}_e$, with
exceptional divisors $E_i$, such that the blown-up points do not lie
on the negative section $C_0$. By Lemma \ref{nef_K} the divisor
$-K_{\overline{S}}+(n-2)F$ is nef on $\overline{S}$.
On the other hand, since $-K_{\mathbb{F}_e}\sim 2C_0+(e+2)F$ we have   
$$-K_{\overline{S}}+(n-2)F \sim 2\widetilde{C}_0 + (e+n)F -\sum_{i=1}^{4n}E_i$$
and $(-K_{\overline{S}}+(n-2)F)\cdot \widetilde{C}_0 = -e+n$,
where $\widetilde{C}_0$ is the strict transform of $C_0$. Therefore, since $-K_{\overline{S}}+(n-2)F$ is nef we get that
$$e\leq n.$$  
The sections of $-mK_{\overline{S}}+m(n-2)F$ are in bijection with the 
curves in $|-mK_{\mathbb{F}_e}+m(n-2)F|$ having multiplicity $m$ at
$4n$ general points. So, Theorem~\ref{th:4nempty} yields that
$h^0(\overline{S},-mK_{\overline{S}}+m(n-2)F)=0$ for all $m\geq 1$,
and by Remark \ref{cbc} we conclude that  
$$h^0(S,-mK_{S}+m(n-2)F) =
h^0(\overline{S},-mK_{\overline{S}}+m(n-2)F)=0$$  
for all $m\geq 1$.
\end{proof}

We are ready to prove the main result on conic bundles that are not birational to the underlying variety of a numerical Calabi-Yau pair. 

\begin{thm}\label{noCYn}
Let $\pi:Z\rightarrow\mathbb{P}^{n-1}$ be a general minimal $n$-fold conic
bundle with discriminant $B_Z\subset\mathbb{P}^{n-1}$ of
degree $d$. If $n\geq 2$ and $d\geq 4n+1$ the divisor $-K_Z$ is not
pseudo-effective. If $n\geq 3$ and $d=4n$ then none of the integral multiples of $-K_{Z}$ is effective. 
\end{thm}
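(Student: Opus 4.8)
The plan is to reduce both assertions to the surface statements already proved in this section, by restricting from $Z$ to the fibres of $\psi\colon W\to\mathbb{G}(1,n-1)$ and invoking Lemma~\ref{can_re}. Consider first the case $d=4n$ and suppose, for a contradiction, that $m(-K_Z)$ is effective for some $m\geq 1$, so that $H^0(Z,-mK_Z)\neq 0$. Since $\phi\colon W\to Z$ is a $\mathbb{P}^{n-2}$-bundle we have $\phi_*\mathcal{O}_W=\mathcal{O}_Z$, whence $H^0(W,\phi^*(-mK_Z))\cong H^0(Z,-mK_Z)\neq 0$ by the projection formula. A nonzero such section does not vanish at the generic point of $W$, which lies in $S_Z=W_\eta$, so it restricts to a nonzero element of $H^0(S_Z,\phi^*(-mK_Z)_{|S_Z})$. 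By Lemma~\ref{can_re} this group equals $H^0(S_Z,m(-K_{S_Z}+(n-2)F))$. By the genericity of $\pi$ (Paragraph~\ref{gen}) the surface $S_Z$ is a general minimal conic bundle with $\delta_{S_Z}=d=4n$, so Proposition~\ref{d12} gives $|-mK_{S_Z}+m(n-2)F|=\emptyset$ for all $m\geq1$, a contradiction. This settles the second assertion.

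For the first assertion take $d\geq 4n+1$ and assume $-K_Z$ is pseudo-effective. Pulling back along the projective bundle $\phi$ keeps $\phi^*(-K_Z)$ pseudo-effective on $W$. Restriction to the generic fibre $S_Z$ of $\psi$ preserves pseudo-effectivity: writing $\phi^*(-K_Z)$ as a limit of effective $\mathbb{Q}$-divisors and discarding the $\psi$-vertical components, each divisor restricts to an effective divisor on $S_Z$, and their limit in $N^1(S_Z)$ is $\phi^*(-K_Z)_{|S_Z}=-K_{S_Z}+(n-2)F$ by Lemma~\ref{can_re}. Thus $-K_{S_Z}+(n-2)F$ is pseudo-effective on the minimal conic bundle surface $S_Z$, whose discriminant has degree $\delta_{S_Z}=d$. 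Since $d\geq 4n+1>8+4(n-2)$, Lemma~\ref{K_1} with $r=n-2$ applies and shows that $m(-K_{S_Z}+(n-2)F)$ is effective for some $m\geq 1$, i.e. $H^0(S_Z,m(-K_{S_Z}+(n-2)F))\neq 0$.

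It remains to contradict this non-vanishing. By Remark~\ref{cbc} it is equivalent to $h^0(\overline{S}_Z,m(-K_{\overline{S}_Z}+(n-2)F))\neq 0$. The key point is that this number is computed on $\overline{S}_Z$ from the intrinsic class $-K_{\overline{S}_Z}+(n-2)F$, and is therefore independent of the chosen blow-down $\overline{S}_Z\to\mathbb{F}_e$: for each such blow-down it equals the number of sections of $\mathcal{L}_{m,d}(\mathbb{F}_e)$, because the conic bundle fibration, and hence the class $F$, is preserved. Since $d\geq 2$ and the points are general, one may contract suitable fibre components to obtain a blow-down onto some $\mathbb{F}_e$ with $e\leq 1\leq n$, with the $d$ points general and off $C_0$. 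Then Lemma~\ref{lyn_Fe} yields $\mathcal{L}_{m,d}(\mathbb{F}_e)=\emptyset$ for all $m\geq 1$, contradicting the non-vanishing above. Hence $-K_Z$ is not pseudo-effective.

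The heart of the argument is the reduction to the surface $S_Z$. In the effective case this is clean, using $\phi_*\mathcal{O}_W=\mathcal{O}_Z$ and the survival of a nonzero section under restriction to the generic fibre. The delicate point is the pseudo-effective case: one must know that pseudo-effectivity descends from $W$ to the generic fibre $S_Z$ of $\psi$, after which the negativity $(-K_{S_Z}+(n-2)F)^2=4n-d<0$ is exactly what allows Lemma~\ref{K_1} to upgrade pseudo-effectivity into genuine effectivity. A secondary technical point is securing the hypothesis $e\leq n$ of Lemma~\ref{lyn_Fe} and Proposition~\ref{d12}; this is harmless because $h^0$ depends only on the intrinsic class on $\overline{S}_Z$ and not on the blow-down, and a general conic bundle surface always admits a model with $e\leq 1$.
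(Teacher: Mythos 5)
Your overall strategy is the paper's: restrict $-K_Z$ to the generic fibre $S_Z$ of $\psi$, identify the restricted class via Lemma~\ref{can_re}, use the minimality of $S_Z$ over $k(t_1,\dots,t_{2n-4})$ together with Lemma~\ref{K_1} to upgrade pseudo-effectivity to effectivity, and then contradict effectivity by the emptiness statements for linear systems on Hirzebruch surfaces (Lemma~\ref{lyn_Fe}, Theorem~\ref{th:4nempty} via Proposition~\ref{d12}). The $d=4n$ case and all the reduction steps match the paper.

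There is, however, a genuine gap at the decisive step of the $d\geq 4n+1$ case. To apply Lemma~\ref{lyn_Fe} you must produce a blow-down $\overline{S}_Z\rightarrow\mathbb{F}_e$ with $e\leq n$ (the hypothesis $e\leq n$ is essential there), and you dispose of this with the assertion that ``one may contract suitable fibre components to obtain a blow-down onto some $\mathbb{F}_e$ with $e\leq 1$'', justified only by the claim that a general conic bundle surface always admits a model with $e\leq 1$. That claim is unproven, and as stated it is false: $\mathbb{F}_3$ blown up at one general point is a conic bundle surface whose only models are $\mathbb{F}_2$ and $\mathbb{F}_3$. More importantly, the genericity hypothesis of the theorem (Paragraph~\ref{gen}) concerns the surface $S_L$ over a \emph{general closed} line $L$ and \emph{one chosen} blow-down; it gives no a priori bound on the integers $e$ occurring among the blow-downs of $\overline{S}_Z$, which lives over $\overline{k(t_1,\dots,t_{2n-4})}$ and whose models are constrained by the Galois descent data coming from the minimality of $S_Z$. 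This is exactly where the paper has to work: Claim~\ref{cl1} (existence of a model with $e\leq n$) is proved by assuming all models have $e>n$, exploiting the effective divisor $\overline{D}$ produced by Lemma~\ref{K_1}, its forced decomposition into Galois conjugate sections (using that $S_Z$ is minimal over the function field), the minimality of the multiple $m$, and a sequence of elementary transformations leading to a section of self-intersection $0$ on some $\mathbb{F}_h$ with $h>1$ --- a contradiction. Without an argument of this kind (or a carefully justified semicontinuity argument reducing to a general closed fibre $S_L$, where one would still have to show that a model with $e\leq n$ and the $d$ points in general position exists and that generality is preserved under the elementary transformations), your proof does not close. A secondary point: your identification of $h^0\bigl(\overline{S}_Z,m(-K_{\overline{S}_Z}+(n-2)F)\bigr)$ with the number of sections of $\mathcal{L}_{m,d}(\mathbb{F}_e)$ for \emph{every} blow-down presupposes that the $d$ blown-up points are in general position for that particular model, which again does not follow directly from the stated genericity hypothesis.
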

\begin{proof} Assume first that $d\geq 4n+1$. By Lemma \ref{can_re} we have $-K_{Z|S_Z} = -K_{S_Z} + (n-2)F$. Assume by contradiction that $-K_{Z}$ is pseudo-effective. Since the morphism $\phi:W\rightarrow Z$ in Lemma \ref{can_re} is dominant and has connected fibers $\phi^{*}(-K_{Z})$ is pseudo-effective, and then \cite[Theorem 6.2]{Pet12} yields that $-K_{Z|S_Z} := \phi^{*}(-K_{Z})_{|S_Z}$ is also pseudo-effective. Then $-K_{S_Z}+(n-2)F$ is pseudo-effective and Lemma \ref{K_1} yields that
$b(-K_{S_Z}+(n-2)F)$ is effective for some $b\gg 0$. Let
$m\in\mathbb{Z}$ be the smallest positive integer such that
$m(-K_{S_Z}+(n-2)F)$ is effective.  

By Remark \ref{cbc} a section $D$ of $m(-K_{S_Z}+(n-2)F)$ induces a section $\overline{D}$ of $m(-K_{\overline{S}_Z}+(n-2)F)$. 
Blowing-down a component of each of the $d$ reducible fibers of the
conic bundle $\overline{S}_Z\rightarrow\mathbb{P}^1$, over
$\overline{k(t_1,\dots,t_{2n-4})}$, we get a morphism
$\mu:\overline{S}_Z\rightarrow\mathbb{F}_{e}$, and we may assume that
none of the blown-up points lies on the negative curve
$C_0\subset\mathbb{F}_{e}$. Then the linear system on $\mathbb{F}_{e}$
of the curves of class $-mK_{\mathbb{F}_e}+m(n-2)F$ having
multiplicity $m$ at $d$ general points has a section  
$$C\sim -mK_{\mathbb{F}_e}+m(n-2)F \sim 2mC_0 + m(e+n)F.$$ 

\begin{Claim}\label{cl1}
The surface $\overline{S}_Z$ admits a blow-down onto an Hirzebruch surface $\mathbb{F}_e$ with $e\leq n$.
\end{Claim}
\begin{proof}
  Assume by
contradiction that for all blow-down maps $\overline{S}_Z\rightarrow\F_e$ we have $e>n\geq 2$.
Then
$$C_0\cdot (2mC_0 + m(e+n)F) = m(n-e) < 0$$ 
and  $C$ contains $C_0$. Denote by $C^{'}$ the residual curve,
and let $\Gamma,\Gamma^{'}\subset\overline{S}_Z$ be the strict
transforms of $C_0$ and $C^{'}$ respectively.   

Recall that $\overline{D} = \Gamma \cup\Gamma^{'}$ comes from a
divisor $D\subset S_Z$ defined over $k(t_1,\dots,t_{2n-4})$. Since
$S_Z$ is minimal $D$ can not split over $k(t_1,\dots,t_{2n-4})$, and
hence $\Gamma$ has a conjugate $\widetilde{\Gamma}$ which is a
component of $\Gamma^{'}$. Hence, $\overline{D} = \Gamma \cup
\widetilde{\Gamma}\cup D^{'}$. Since, $\Gamma$ and
$\widetilde{\Gamma}$ are conjugate we have that $\widetilde{\Gamma}$
is also a section. So, we may write  
$$\widetilde{\Gamma} \sim \Gamma + a F -\sum_{i=1}^d E_i$$
with $a\geq 0$. Therefore, 
$$D^{'}\sim \overline{D} - \Gamma - \widetilde{\Gamma} = (2m-2)\Gamma +(m(e+n)-a)F -(m-1)\sum_{i=1}^dE_i.$$
If $a = e+n + c$, for some $c\geq 0$, then
$$D' + cF \sim (2m-2)\Gamma + (m-1)(e+a)F - (m-1)\sum_{i=1}^dE_i\sim (m-1)(-K_{\F_e}+(n-s)F)$$
would be effective contradicting the minimality of $m$.
This shows that
\stepcounter{thm}
\begin{equation}
  \label{eq:aen}
  a < e+n.
\end{equation}
Since $\Gamma$ and $\widetilde{\Gamma}$ are conjugate we have 
$$\widetilde{\Gamma}^2 = -e+2a-d = -e = \Gamma^2.$$
This forces  $d = 2a$. By blowing-down $a$ components of the reducible
fibers among the ones intersecting $\Gamma$, and $a$ components of the
reducible fibers among the ones intersecting $\widetilde{\Gamma}$ we
get a morphism $\overline{S}_Z\rightarrow\mathbb{F}_{e^{'}}$ with
$e^{'}\geq n+1$. The images $A,A^{'}\subset \mathbb{F}_{e^{'}}$ of
$\Gamma,\widetilde{\Gamma}$ are sections with self-intersection
$$A^2 =
(A^{'})^2 = -e+a.$$
The sections $A$ and $A'$ are distinct and $e>n\geq 2$ therefore
$$-e+a >0.$$
This combined with (\ref{eq:aen}) yields
$$1\leq -e+a
< n.$$
Consider a composition of $-e+a$ elementary transformations (\ref{el_2}) $\mathbb{F}_{e^{'}}\dasharrow\mathbb{F}_h$ centered at $-e+a$ general
points of $A$, and let $B\subset\mathbb{F}_h$ be the strict transform
of $A$.

By assumption $e'\geq n+1$, and hence $h> 1$. On the other hand, by construction
$B\subset\mathbb{F}_h$ is a section of self-intersection $B^2 =
0$. This contradiction proves the claim.
\end{proof}
By Claim \ref{cl1} we may assume that there exists a blow-down
$\overline{S}_Z\rightarrow\mathbb{F}_e$ with $e\leq n$, and to
conclude it is enough to apply Lemma~\ref{lyn_Fe}.

Finally, in the case $d = 4n$ it is enough to note that by the proof of Proposition \ref{d12} the surface $\overline{S}_Z$ admits a blow-down onto an Hirzebruch surface $\mathbb{F}_e$ with $e\leq n$, and to apply Theorem \ref{th:4nempty}.
\end{proof}

\begin{Remark}\label{n=1}
When $n = 2$ and $d = 4n = 8$ the second statement in Theorem \ref{noCYn} can not hold. Let $S\rightarrow\mathbb{P}^1$ be a minimal conic bundle surface with discriminant of degree $d = 8$, and consider the blow-down $\overline{S}\rightarrow\mathbb{F}_e$. Since $\mathbb{F}_e$ is a toric surface the cohomology of a divisor $aC_0+bF$ on $\mathbb{F}_e$ can be computed in terms of lattice objects. Indeed, when $aC_0+bF\in \Eff(\mathbb{F}_e) = \left\langle C_0,F\right\rangle$ \cite[Proposition 4.3.3]{CLS11} yields 
$$
h^0(\mathbb{F}_e,aC_0+bF) = \sum_{i = 0}^{\min\left(a,\lfloor\frac{b}{e}\rfloor\right)}(b-ie+1)
$$
when $e\geq 1$, and $h^0(\mathbb{F}_0,aC_0+bF) = (a+1)(b+1)$. In particular, for $-K_{\mathbb{F}_e} = 2C_0 +(e+2)F$ we have that $h^0(\mathbb{F}_e,-K_{\mathbb{F}_e})\geq 9$ for all $e\geq 0$ and hence, applying Remark \ref{cbc}, we get that $h^0(S,-K_{S}) = h^0(\overline{S},-K_{\overline{S}}) \geq 1$.
\end{Remark}

\begin{Remark}\label{Ottem}
If $n = 3$ and $d \leq 5$ then $-K_Z$ is birationally
pseudo-effective. Indeed, in this case $Z$ is either rational or
birational to a smooth cubic $3$-fold, see for instance
\cite[Proposition 8.9 and Theorem 9.1]{Pr18}.

It is tempting to believe that whenever $-K_{S_Z}+(n-2)F$ is pseudo-effective
$-K_Z$ is birationally pseudo-effective. Unfortunately, this is in general not
true and it is one of the reasons for which we do not have positive results on the existence of numerical Calabi-Yau conic bundles beside the one just mentioned.

Let $Z\subset\mathbb{P}^2\times\mathbb{P}^2$ be a conic bundle defined
by a smooth hypersurface of bidegree $(2,4)$. By \cite[Theorem
1.1]{Ott15} the effective cone of $Z$ is closed and generated by the
restrictions $H_{1|Z},H_{2|Z}$ of the generators of the effective cone
$\mathbb{P}^2\times\mathbb{P}^2$. Since $-K_{Z} = H_{1|Z}-H_{2|Z}$ we
see that  it is not pseudo-effective. On the other hand, by
Proposition~\ref{d12} for these conic bundles $-K_{S_Z}+F$ is
pseudo-effective.

Even when $-K_{S_Z}+F$ is effective it is not clear whether or not a section of $-K_Z$ exists, and if so how to construct it. Let $\pi_p:\mathbb{P}^{2}\dasharrow\mathbb{P}^{1}$ be the projection from a general point $p\in\mathbb{P}^{2}$, $W$ the blow-up of $\mathbb{P}^{2}$ at $p$, and $\widetilde{\pi}_p:W\rightarrow\mathbb{P}^1$ the resolution of $\pi_p$. Set $F_p:=\pi^{-1}(p)$. By blowing-up $F_p$ in $Z$ we get a conic bundle $\widetilde{\pi}:\widetilde{Z}\rightarrow W$ which, via the morphism $\widetilde{\pi}_p\circ\widetilde{\pi}:\widetilde{Z}\rightarrow \mathbb{P}^{1}$, can be viewed as a surface conic bundle $S_{\widetilde{Z}}$ over $k(t)$. Up to applying a sequence of elementary transformations to $Z$ as described in (\ref{el_3}), we may assume that the algebraic closure $\overline{S}_{\widetilde{Z}}$ of $S_{\widetilde{Z}}$ is a blow-up of $\mathbb{F}_1$. Since $d < 12$ the linear system $|-K_{\overline{S}_{\widetilde{Z}}}+F|$ has a section and hence, by Remark \ref{cbc}, $|-K_{S_{\widetilde{Z}}}+F|$ also has a section $\Gamma$ which spreads to a divisor $\widetilde{D}_{\Gamma}\subset\widetilde{Z}$. Let $D_{\Gamma}\subset Z$ be the image of $\widetilde{D}_{\Gamma}$.  

Note that, $D_{\Gamma}$ might contain the fiber $F_p$ with a certain multiplicity $m$. Hence, if $L_p\subset\mathbb{P}^{2}$ is a general line through $p$ and $S_{L_p}:=\pi^{-1}(L_p)$, we have $D_{\Gamma|S_p}\sim -K_{S_p}+(m+1)F$. To conclude that $D_{\Gamma}$ is indeed a section of $-K_Z$ one would have to prove that $m = 0$.
\end{Remark}

\section{Unirational conic bundles}\label{ur}

In the notation of (\ref{gen}), taking $d_0 = 2a_0, d_1 = a_0+a_1, d_2 = a_0+a_2, d_3 = 2a_1, d_4 = a_1+a_2, d_5 = 2a_2$, we get the conic bundle
\stepcounter{thm}
\begin{equation}\label{Cox_P}
X_{a_0,a_1,a_2} := \{\sigma_{2a_0}y_0^2+2\sigma_{a_0+a_1}y_0y_1+2\sigma_{a_0+a_2}y_0y_2+\sigma_{2a_1}y_1^2+2\sigma_{a_1+a_2}y_1y_2+\sigma_{2a_2}y_2^2 = 0\}\subset \mathcal{T}_{a_0,a_1,a_2}.
\end{equation}
We will denote by $\pi_{X_{a_0,a_1,a_2}}:X_{a_0,a_1,a_2}\rightarrow\mathbb{P}^{n-1}$ the restriction to $X_{a_0,a_1,a_2}$ of the projection $\mathcal{T}_{a_0,a_1,a_2} = \mathbb{P}(\mathcal{E}_{a_0,a_1,a_2 })\rightarrow\mathbb{P}^{n-1}$. 

The following is well-known but we were not able to find any reference.
\begin{Lemma}\label{2to1c}
Let $f:X\rightarrow\mathbb{P}^n$ be a double cover, over an algebraically closed field, ramified over an irreducible hypersurface $B_X\subset \mathbb{P}^n$ of degree four. If $n\geq 2$ then $X$ is unirational. 
\end{Lemma}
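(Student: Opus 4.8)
The plan is to write $X$ as the degree‑four hypersurface $\{w^2 = F_4(x_0,\dots,x_n)=0\}$ in the weighted projective space $\mathbb{P}(1^{n+1},2)$, where $F_4$ cuts out $B_X$, and to reduce to the surface case by slicing with $2$‑planes. The crucial observation is that for a general $2$‑plane $\Pi\subset\mathbb{P}^n$ the restriction $X_\Pi := f^{-1}(\Pi)$ is a double cover of $\Pi\cong\mathbb{P}^2$ branched over the plane quartic $B_X\cap\Pi$ (smooth and irreducible for general $\Pi$ by Bertini), hence a del Pezzo surface of degree two. Such surfaces become rational over an algebraically closed field and, what I shall actually use, are unirational over any field over which they carry a rational point in general position; this classical fact is the heart of the argument and the step I expect to be the main obstacle to keep self‑contained.

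To promote this to a statement about all of $X$, I first fix a general point $q_0\in X$ and set $x_0 := f(q_0)\in\mathbb{P}^n\setminus B_X$ (such $q_0$ exists over $\bar k$, or after a field extension). I then consider the family of $2$‑planes through $x_0$, parametrized by the rational variety $\Sigma\cong\mathbb{G}(2,n)$, and form the incidence variety $\mathcal{Y} = \{(q,\Pi)\in X\times\Sigma : f(q)\in\Pi\}$ together with its two projections $\mathcal{Y}\to\Sigma$ and $\mathcal{Y}\to X$. The first projection realizes $\mathcal{Y}$ as a family of del Pezzo surfaces of degree two whose generic fibre is $X_\Pi$ over the function field $k(\Sigma)$; the point $q_0$, which lies on $X_\Pi$ for every $\Pi\ni x_0$, provides a tautological section, so this generic fibre carries a $k(\Sigma)$‑rational point.

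By the del Pezzo unirationality input above, the generic fibre of $\mathcal{Y}\to\Sigma$ is then unirational over $k(\Sigma)$, and since $\Sigma$ is rational it follows that $\mathcal{Y}$ is unirational over $k$. It remains to check that the second projection $\mathcal{Y}\to X$ is dominant: a general point $q\in X$ with $f(q)=x\neq x_0$ lies in $X_\Pi$ for every $2$‑plane $\Pi$ containing the line $\overline{x_0x}$, and since $n\geq 2$ these form a positive–dimensional (indeed $\mathbb{P}^{n-2}$) family, so $\mathcal{Y}\to X$ is surjective. As a dominant rational image of a unirational variety is unirational, $X$ is unirational. The case $n=2$ is the base case itself: there $\Pi=\mathbb{P}^2$, so $X$ is directly a del Pezzo surface of degree two carrying the rational point $q_0$, and the del Pezzo input applies to $X$ at once.

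Two points will require care. First, the rational‑point issue: over a general perfect field $k$ one needs $X(k)\neq\emptyset$ for the tautological section to be defined over $k$; this is automatic over $\bar k$ and is the natural setting in which \emph{unirational} is understood. Second, the degree‑two del Pezzo unirationality requires the marked point to avoid the finitely many $(-1)$‑curves and a few further special loci; since $q_0$, and hence its image on the generic $X_\Pi$, is general, this genericity is available, and this is precisely where the bulk of the (classical) work sits. Alternatively, one could handle the surface fibre by exhibiting $X_\Pi$ as a conic bundle and applying the Enriques criterion of Proposition \ref{Enr}, but the del Pezzo route keeps the reduction most transparent.
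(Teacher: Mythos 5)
Your proposal is correct and follows essentially the same route as the paper: slice $X$ by a rational family of $2$-planes so that the generic member becomes a degree-two del Pezzo surface over the function field of the base, exhibit a rational point in general position on it, and invoke Manin's unirationality theorem for pointed degree-two del Pezzo surfaces. The only (inessential) variations are that the paper uses the net of planes through a general \emph{line} $L$ — so that the total space is birational to $X$ itself and the section of the del Pezzo fibration comes for free from the exceptional divisor over $f^{-1}(L)$ — whereas you use planes through a general point of $X$ and must separately note the dominance of $\mathcal{Y}\to X$ and the existence of a rational point on $X$.
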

\begin{proof}
Let $L\subset\mathbb{P}^n$ be a general line and $C = f^{-1}(L)$. Denote by $\widetilde{X}$ the blow-up of $X$ along $C$ with exceptional divisor $E$, $W$ the blow-up of $\mathbb{P}^n$ along $L$, $\widetilde{f}:\widetilde{X}\rightarrow W$ the morphism induced by $f$, and $\widetilde{\pi}_L:W\rightarrow\mathbb{P}^{n-2}$ the resolution of the projection from $L$. 

Via the morphism $\widetilde{\pi}_L\circ\widetilde{f}:\widetilde{X}\rightarrow\mathbb{P}^{n-2}$ we can view $\widetilde{X}$ as a degree two del Pezzo surface $S_{\widetilde{X}}$ over $k(t_1,\dots,t_{n-2})$, see for instance \cite[Proposition 6.3.6]{Do12}.

Now, a general fiber of $E\rightarrow C$ yields a $k(t_1,\dots,t_{n-2})$-point of $S_{\widetilde{X}}$. Furthermore, since $L$ and $F$ are general we may assume that such a $k(t_1,\dots,t_{n-2})$-point is not on an exceptional curve. Therefore, by \cite[Theorem 29.4]{Ma86} $S_{\widetilde{X}}$ is unirational over $k(t_1,\dots,t_{n-2})$, and hence $X$ is unirational over the base field $k$.    
\end{proof}

Building on \cite[Example 20]{Ko17} we have the following result. 

\begin{Proposition}\label{pur}
If either $(a_0,a_1,a_2) = (c,0,0)$, or $(a_0,a_1,a_2) \in \{(a,2,0),(b,1,0)\}$ and the base field is algebraically closed then a general conic bundle of the form $X_{a_0,a_1,a_2}$ is smooth and unirational with discriminant of degree respectively $2c+3$, $2a+4$ and $2b+2$.
\end{Proposition}
\begin{proof}
Since $a_0\geq a_1\geq a_2\geq 0$ the secondary fan of $\mathcal{T}_{a_0,a_1,a_2}$ is as follows:
$$
\begin{tikzpicture}[line cap=round,line join=round,>=triangle 45,x=1.0cm,y=1.0cm]
\clip(-3.5,-0.2) rectangle (3.7,2.2);
\draw [->,line width=0.4pt] (0.,0.) -- (2.,0.);
\draw [->,line width=0.4pt] (0.,0.) -- (-1.,2.);
\draw [->,line width=0.4pt] (0.,0.) -- (-2.,2.);
\draw [->,line width=0.4pt] (0.,0.) -- (-3.,2.);
\begin{scriptsize}
\draw [fill=black] (2.,0.) circle (0.0pt);
\draw[color=black] (2.85,0.029530201342281577) node {$x_0,\dots , x_{n-1}$};
\draw [fill=black] (-1.,2.) circle (0.0pt);
\draw[color=black] (-0.65,2.026845637583892) node {$y_2$};
\draw [fill=black] (-2.,2.) circle (0.0pt);
\draw[color=black] (-1.6,2.026845637583892) node {$y_1$};
\draw [fill=black] (-3.,2.) circle (0.0pt);
\draw[color=black] (-2.55,2.026845637583892) node {$y_0$};
\end{scriptsize}
\end{tikzpicture}
$$
where the nef cone $\Nef(\mathcal{T}_{a_0,a_1,a_2})$ is generated by the ray $(1,0)$, corresponding to $x_0,\dots,x_{n-1}$, and the ray $(-a_2,1)$ corresponding to $y_2$.  

The variety $X_{a_0,a_1,a_2}$ is cut out by an equation of bidegree $(0,2)$. Hence, since $a_2\geq 0$ we have that $X_{a_0,a_1,a_2}$ is either the zero locus of a section of an ample divisor if $a_2 >0$, and of a strictly nef divisor if $a_2=0$. In any case, by \cite[Theorem 6.3.12]{CLS11} $X_{a_0,a_1,a_2}\subset \mathcal{T}_{a_0,a_1,a_2}$ is cut out by a section of a globally generated divisor, and hence if such section is general by Bertini's theorem \cite[Corollary 10.9]{Ha77} $X_{a_0,a_1,a_2}$ is smooth.

Now, consider the divisor $D$ in $X_{a_0,a_1,a_2}$ defined, in terms of the presentation of $X_{a_0,a_1,a_2}$ in (\ref{Cox_P}), as follows:
\stepcounter{thm}
\begin{equation}\label{f_div}
D = \{y_0 = \sigma_{2a_1}y_1^2+2\sigma_{a_1+a_2}y_1y_2+\sigma_{2a_2}y_2^2 = 0\}\subset X_{a_0,a_1,a_2}.
\end{equation}
Hence, $\pi_{X_{a_0,a_1,a_2}|D}:D\rightarrow\mathbb{P}^{n-1}$ is a double cover of $\mathbb{P}^{n-1}$ with branch locus given by the degree $2(a_1+a_2)$ hypersurface $B_{D}:=\{\sigma_{a_1+a_2}^2-\sigma_{2a_1}\sigma_{2a_2} = 0\}\subset\mathbb{P}^{n-1}$. 

When the base field is algebraically closed we may assume that $n\geq 3$ since all surface conic bundle are rational. For the triple $(a,2,0)$ we have that $\pi_{X_{a_0,a_1,a_2}|D}:D\rightarrow\mathbb{P}^{n-1}$ is a double cover of $\mathbb{P}^{n-1}$ with branch locus given by the quartic $B_{D}:=\{\sigma_2^2-\sigma_4\sigma_0 = 0\}\subset\mathbb{P}^{n-1}$. So, Lemma \ref{2to1c} yields that $D$ is unirational. Hence, by Proposition \ref{Enr} we conclude that $X_{a_0,a_1,a_2}$ is unirational as well. 

For the triple $(b,1,0)$ the double cover $\pi_{X_{a_0,a_1,a_2}|D}:D\rightarrow\mathbb{P}^{n-1}$
is ramified over the quadric $B_{D}:=\{\sigma_1^2-\sigma_2\sigma_0 = 0\}\subset\mathbb{P}^{n-1}$. Then $D$ is rational for all $n \geq 2$, and again by Proposition \ref{Enr} we conclude that $X_{a_0,a_1,a_2}$ is unirational.  

Finally, let $k$ be an arbitrary field and consider the case $(a_0,a_1,a_2) = (c,0,0)$. Then $D$ is a divisor of bidegree $(1,2)$ in $\mathbb{P}^{n-1}\times\mathbb{P}^1$. Hence $D$ is rational and again by Proposition \ref{Enr} we conclude that $X_{a_0,a_1,a_2}$ is unirational.  
\end{proof}

\begin{Lemma}\label{ext}
Let $X_{a_0,a_1,a_2}$ be a conic bundle of the form (\ref{Cox_P}). Assume that $\sigma_{2a_i}$ is non zero for $i = 0,1,2$, $\sigma_{2a_1}$ is non constant, and either $n\geq 3$ or $n = 2$ and the base field is not algebraically closed. If $X_{a_0,a_1,a_2}$ is general then it is extremal. 
\end{Lemma}
\begin{proof}
We will prove that the inverse image in $X_{a_0,a_1,a_2}$ of any irreducible divisor in $\mathbb{P}^{n-1}$ is irreducible. Let $D\subset\mathbb{P}^{n-1}$ be an irreducible divisor and assume by contradiction that $\overline{D} = \pi_{X_{a_0,a_1,a_2}}^{-1}(D)$ is reducible. Then $\overline{D}$ must be a component of the discriminant divisor $B_{X_{a_0,a_1,a_2}}$ of $X_{a_0,a_1,a_2}$. 

Thanks to our hypotheses me may specialize $X_{a_0,a_1,a_2}$ to the conic bundle
$$
Y_{a_0,a_1,a_2} := \{\sigma_{2a_0}y_0^2+\sigma_{2a_1}y_1^2+\sigma_{2a_2}y_2^2 = 0\}\subset \mathcal{T}_{a_0,a_1,a_2}
$$
where $\sigma_{2a_i}$ is non zero for $i = 0,1,2$ and $\sigma_{2a_1}$ is non constant. The discriminant divisor $B_{X_{a_0,a_1,a_2}}$ specializes to $B_{Y_{a_0,a_1,a_2}} = \{\sigma_{2a_0}\sigma_{2a_1}\sigma_{2a_2} = 0\}\subset\mathbb{P}^{n-1}$. Therefore $D_i =\pi_{Y_{a_0,a_1,a_2}}^{-1}(\{\sigma_{2a_i}=0\})$ must be reducible for some $i$. On the other hand, the hypotheses on the $\sigma_{2a_i}$ and the genericity of $X_{a_0,a_1,a_2}$ ensure that if $n\geq 2$ then $D_i$ is irreducible for all $i$. 

If $n = 2$ the conic bundle $Y_{a_0,a_1,a_2}$ might have geometrically reducible fibers defined over the base field $k$ but when $Y_{a_0,a_1,a_2}$ is general and $k$ is not algebraically closed such geometrically reducible fibers will be irreducible over $k$.      
\end{proof}

\begin{Remark}\label{ur_hd}
Proposition \ref{pur} and Lemma \ref{ext} say, keeping in mind the flatness issue in Remark~\ref{con_fib}, that for $n\geq 3$ there are extremal unirational smooth $n$-fold conic bundles with discriminant of arbitrarily high degree, and that this fact also holds for $n = 2$ when the base field is not algebraically closed.

The construction in the proof of Proposition \ref{pur}, with $a_1 =3$ and $a_2 = 0$,
has been introduced by J. Koll\'{a}r in \cite[Example 20]{Ko17} in
order to prove that there are smooth $3$-fold conic bundles $X_{a_0,a_1,a_2}$, with
smooth discriminant of arbitrarily high degree, such that $-K_{X_{a_0,a_1,a_2}}$ is
effective. Indeed, when $a_1 = 3,a_2 = 0$ the surface $D$ in the proof of
Proposition \ref{pur} is branched over a sextic. Hence, $D$ is a $K3$
surface providing a section of $-K_{X_{a_0,a_1,a_2}}$.

More generally, in any dimension $n\geq 2$, there are smooth $n$-fold conic bundles with discriminant of arbitrarily high degree and effective anti-canonical divisor. Indeed, since
$$-K_{\mathcal{T}_{a_0,a_1,a_2}} = (n-a_0-a_1-a_2)H_1+3H_2$$
and $X_{a_0,a_1,a_2}$ is cut out by an equation of bidegree $(0,2)$ we get that
$$-K_{X_{a_0,a_1,a_2}} = (n-a_0-a_1-a_2)H_{1|X_{a_0,a_1,a_2}}+H_{2|X_{a_0,a_1,a_2}}.$$
In particular, for $a_1 = n,a_2=0$ we get $-K_{X_{a_0,a_1,a_2}} = -a_0H_{1|X_{a_0,a_1,a_2}}+H_{2|X_{a_0,a_1,a_2}}$. To conclude it is enough to note that in this case the divisor $D\subset X_{a_0,a_1,a_2}$ in (\ref{f_div}) is cut out on $X_{a_0,a_1,a_2}$ by $\{y_0=0\}$ which has bidegree $(-a_0,1)$.
\end{Remark}

\bibliographystyle{amsalpha}
\bibliography{Biblio}

\end{document}